\numberwithin{equation}{section}
\newtheorem{theorem}{\bf Theorem}[section]
\newtheorem{definition}[theorem]{\bf Definition}
\newtheorem{example}[theorem]{\bf Example}
\newtheorem{lemma}[theorem]{\bf Lemma}
\newtheorem{remark}[theorem]{\bf Remark}
\newtheorem{obs}[theorem]{\bf Remark}
\newcommand{\R}{\mathbb{R}}
\newcommand{\E}{\mathbb{E}}
\newcommand{\qe}{\mathbb {Q}^{n+1}_{\varepsilon}}
\begin{document}

\title[On the inverse mean curvature flow by parallel hypersurfaces in space forms]{On the inverse mean curvature flow by parallel hypersurfaces in space forms}

\author[Alancoc dos Santos Alencar]{Alancoc dos Santos Alencar$^1$}
\author[Keti Tenenblat]{Keti Tenenblat$^2$}
\address{$^1$ Departamento de Matemática, Universidade de Brasília, Campus Universitário Darcy Ribeiro, Brasília - DF, 70910-900, Brazil}
\address{$^2$ Departamento de Matemática, Universidade de Brasília, Campus Universitário Darcy Ribeiro, Brasília - DF, 70910-900, Brazil}
\thanks{$^1$ Supported by CAPES/Brazil - Finance Code 001}
\thanks{$^2$Partially supported by CNPq Proc. 311916/2021-0, Ministry of Science and Technology, Brazil and CAPES/Brazil - Finance Code 001.}
\keywords{Space forms; Parallel hypersurfaces; Inverse mean curvature flow; Isoparametric hypersurfaces,ancient solutions, immortal solutions, eternal solutions, minimal hypersurfaces.}
\subjclass[2020]{53E10 (primary), 53C42 (secondary).} 

\begin{abstract}  
We consider the inverse mean curvature flow by parallel hypersurfaces in space forms. We show that such a flow exists if and only if the initial  hypersurface is isoparametric. The flow is characterized by 
 an algebraic equation satisfied by the distance function of the parallel hypersurfaces. The solutions to the flow are obtained explicitly
 when the distinct principal curvatures  have the same multiplicity.  This is an additional assumption only for  isoparametric hypersurfaces of the hyperbolic space or of the sphere with two or four distinct principal curvatures.  The boundaries of the maximal interval of definition, 
 when finite, are determined  in terms of the number $g$ of distinct principal curvatures, their multiplicities $m$  and  the mean curvature $H$ of  the initial hypersurface. We describe the collapsing submanifolds of the flow at the boundaries of the interval.
   In particular, we show in the Euclidean space  the solutions are eternal,  while in the hyperbolic space there are eternal and immortal solutions.  
  Starting with a connected  isoparametric submanifold of the sphere, we show that the flow is an ancient solution, that collapses into a minimal hypersurface whose square length of its second fundamental form and its scalar curvature are constants given in terms of $g$ and $n$.
The minimal hypersurface is totally geodesic when $g=1$, it is a Clifford minimal hypersurface of the sphere when $g=2$ and it is a Cartan type minimal submanifold when $g\in\{3,4,6\}$.  
   \end{abstract} 
\maketitle


\section{Introduction}\label{Introduction}
Let $\overline{M}^{n+1}$ be a Riemannian manifold with a given metric. Consider $F:M^{n} \longrightarrow \overline{M}^{n+1}$ an oriented connected  hypersurface with a unit normal vector field $N$. A one parameter family of hypersurfaces ${F}^t:M \longrightarrow \overline{M}$,  $t\in I$, $0 \in I \subset \R$, is a solution to the {\it {Inverse Mean Curvature Flow}} (IMCF), with initial data $F$, if
\begin{eqnarray}\label{eqIMCF}
\frac{\partial {F}^t}{\partial t}(x)&=&-\frac{1}{{H}^t(x)}{N}^t(x)\qquad \mbox{ for } (x,t)\in M \times I,  \\
{F}^0(x)&=&F(x)
\nonumber
\end{eqnarray}
where ${H}^{t}(.)$ is the mean curvature and ${N}^{t}(.)$ 
is the inward unit normal vector field of ${F}^{t}(.)$, with ${H}^{t}(x)>0$, $\forall (x,t) \in M\times I$. We say that ${F}^t$ is an {\it {ancient solution}} if $I=(-\infty,t^{*})$, with $t^{*}>0$. If $I=(t^{*},+\infty)$, with $t^{*}<0$, then ${F}^t$ is called an {\it {immortal solution}} and if $I=\R$ then  ${F}^t$ is called an {\it {eternal solution}}.

The inverse mean curvature flow has been studied extensively in recent decades. We mention two important papers related to the IMCF. In $2001$, using the IMCF, Huisken and Ilmanen \cite{HuiskenIlmanen} proved the Riemannian Penrose Inequality. The Penrose inequality was conjectured in $1973$ and it relates, under some conditions, the mass of certain spacetimes to the area of their black holes. The semi-Riemannian case remains an open problem.  On the other hand, in $2004$, using IMCF, Bray and Neves \cite{BrayNeves} proved the Poincaré conjecture for $3$-manifolds with Yamabe invariant greater than the Yamabe invariant of real projective $3$-space. There are several papers that relate the IMCF with physical problems involving general relativity and black holes, see for example: \cite{Bray}, \cite{LeeandNeves}, \cite{Ger} and \cite{Ger2}. 

\newpage 

The mean curvature flow by parallel hypersurfaces in simply connected space forms was studied by Reis and Tenenblat \cite{ReisTenenblat}, where it is proved that such a flow exists if and only if  
the initial hypersurface is isoparametric. Moreover, by solving an ordinary differential equation, explicit solutions are given for all isoparametric hypersurfaces in space forms. In particular, for such hypersurfaces of the sphere, the exact  collapsing time is given in terms of its dimension, the principal curvatures and their multiplicities. In \cite{GuiSantosSantos}, the authors considered the evolution of the mean curvature flows in  Riemannian manifolds, by isoparametric hypersurfaces.

 The theory  of the mean curvature flow was extended  by  de Lima \cite{deLima} to Weingarten flows by parallel hypersurfaces. For an orientable hypersurface  
$f:M^n\rightarrow \bar{M}^{n+1}$, with principal curvature functions $k_1,...,k_n$ in a normal direction $N$,   a Weingarten function of $f$, $W_f(k_1,...k_n)$ is defined as a function which is symmetric, homogeneous, monotone increasing with respect to any of its variables and positive when $k_i>0,\, \forall i$. The author considers a Weingarten flow, i.e.,  a 1-parameter family of smooth oriented hypersurfaces $F_t: M^n\rightarrow \bar{M}^{n+1}$ satisfying the evolution equation $\partial F_t/\partial t=W_{F_t} N$, $t\in [0,T)$, with $F_0=f$ and  seeks for solutions $F_t$ that are parallel to the initial data $f$. 
 Without proving that for Weingarten flows the initial hypersurface must be necessarily isoparametric, but  motivated by \cite{ReisTenenblat} and by the existence  of isoparametric hypersurfaces in simply connected space forms as well as in rank one symmetric spaces of non compact type, the  author assumes that the initial immersion $f$ is isoparametric. Starting with such a hypersurface, existence results are obtained when the ambient space $\bar{M}^{n+1}$ is either a simply connected space form or a rank-one 
 symmetric space of noncompact type. Whenever the ambient space is the sphere some additional  conditions are required for the Weingarten function. 
Moreover, in \cite{deLima},  for complete connected hypersurfaces,  an embedding preserving property and an avoidance principle are proved.

In this paper, motivated by the importance of the inverse mean curvature flow, we study in detail the IMCF  \eqref{eqIMCF}, by parallel hypersurfaces in simply connected space forms. This flow is in the expanding direction as considered by Huisken - Ilmanen \cite{HuiskenIlmanen}, i.e., the mean curvature is positive (the normal vector field is inward) and the flow is in the outward normal direction. We observe that $-1/H$ does not satisfy the requirement of a  Weingarten function, as defined in \cite{deLima}, since it is not positive whenever  all the principal curvatures are positive. Therefore, we point out that  none of our results is contained in \cite{deLima}.

 In analogy to \cite{ReisTenenblat}, we start showing that a family of parallel hypersurfaces evolves under the IMCF if, and only if, the initial hypersurface is isoparametric (Theorem \ref{res.16}). In this theorem, we characterize such flows in terms of an algebraic equation satisfied by the distance function of the parallel hypersurfaces.   
 Using Theorem \ref{res.16} and the complete characterization of isoparametric hypersurfaces by Segre \cite{Segre} in Euclidean space, by Cartan \cite{Cartan} in the hyperbolic space and by work of Cartan \cite{Cartan} and M\"unzner \cite{Munzner} in the sphere,  
  we obtain  explicitly the solutions  to the IMCF, when the distinct principal curvatures  have the same multiplicity $m$. 
We point out that this is an additional assumption only for  
isoparametric hypersurfaces of the hyperbolic space or of the sphere
with $2$ or $4$ distinct principal curvatures. 
We consider the maximal interval of definition of the flow in the whole $\mathbb{R}$, which allows us to determine if the solutions are eternal, ancient or immortal. The finite boundaries of  the maximal interval of definition  of the flow are given explicitly in terms of the number $g$ of 
 distinct principal curvatures, their multiplicities $m$ and the mean curvature of  the initial hypersurface.   We also  show that when the flow tends to the boundary of the interval of definition, it converges to certain submanifolds of the space form.

Theorem \ref{res.12novo} provides the results in the Euclidean space. In Theorems \ref{res.11}-\ref{res.17},  we treat the flow in the hyperbolic space and in Theorem \ref{resgeral} we provide the results for hypersurfaces in the sphere. 
   In particular, starting with an isoparametric submanifold of the sphere,  we show that the flow is an ancient solution, i.e.,  it is defined in the interval $(-\infty, t^*)$, where 
 $t^*=m/2\ln (H^2/n^2+1)$. The flow expands from  
  an $m(g-1)$-dimensional submanifold and it collapses into a minimal hypersurface (when $t\rightarrow t^*$),  which is a totally geodesic hypersurface of the sphere if $g=1$, 
it is a Clifford minimal hypersurface of the sphere, when $g=2$ and it is a Cartan type minimal hypersurface \cite{Cartan}, when $g\in\{3,4,6\}$. 
  The  square length of its second fundamental form $|A|^2$  and its scalar curvatures $R$ are constants 
given by $|A|^2=n(g-1)$ and $R=n(n-g)$ respectively  (see Theorem \ref{resgeral}).     

 We should mention that the additional assumption mentioned above, in the case of 2 or 4 distinct principal curvatures, is due to the difficulty of solving the algebraic equation of the distance function  whenever the distinct principal curvatures do not have the same multiplicity. It would be interesting to investigate the IMCF in this case. 
 
 This paper is organized as follows. 
  In Section \ref{preliminaries}, we fix our notation and include some basic results that will be used in the following sections. 
    In Section \ref{Main results}, we state our main results, 
   whose proofs are given in Section \ref{Proofs}.
 In Section \ref{visual}, we visualize some solutions to the IMCF.

\section{Preliminaries}\label{preliminaries}
Let $\overline{M}^{n+1}$ be  a simply connected space form, i.e., $\overline{M}^{n+1}$ is the Euclidean space $\mathbb{E}^{n+1}$, the sphere $\mathbb{S}^{n+1} \subset \E^{n+2}$ or the hyperbolic space $\mathbb{H}^{n+1} \subset \mathbb{L}^{n+2}$, where $\mathbb{L}^{n+2}$ is the Lorentz space of dimension $n+2$. 
From now on, we will consider  connected  oriented hypersurfaces  of  $\overline{M}^{n+1}$, whose mean curvature $H$ does not vanish, therefore w.l.o.g. we will chose the unit normal vector such that  $H>0$.  
In what follows, we will denote 
the space form with the standard metric $\langle \cdot, \cdot\rangle$  by $\mathbb {Q}^{n+1}_{\varepsilon}$, where
\begin{eqnarray}
\mathbb {Q}^{n+1}_{\varepsilon}= \left\{ \begin{array}{ll}
\mathbb {S}^{n+1},	& \mbox{if} ~~ \varepsilon = 1, \nonumber \\ 
\mathbb {E}^{n+1},	& \mbox{if} ~~ \varepsilon = 0, \nonumber \\
\mathbb {H}^{n+1},	& \mbox{if} ~~ \varepsilon = -1, \nonumber
\end{array} 
\right.
\end{eqnarray}
 and $
\mathbb {H}^{n+1}=\{(x_{1},\ldots,x_{n+1}) \in \mathbb {L}^{n+2}:-x_{1}^{2}+   
\sum_{i=2}^{n+2}x_i^2=-1 \mbox{, } x_{1}>0\} 
$. 

Consider $F:M^{n} \longrightarrow \qe$ an oriented hypersurface with the metric induced by  the space form. 
The quadratic form defined by metric is  the {\it {first fundamental form}} of the hypersurface $F(M)$ in $x \in M$, $I_x(v,w)=\langle dF_{x}(v),dF_{x}(w) \rangle$, $u,v \in T_{x}M$ . 
If $N$ is a unit normal vector field, the {\it{second fundamental form}} of $M$ is given by 
$ II_{x}(v,w)=-\langle dN_{x}(v),w \rangle$, 
$\forall v, w \in T_{x}M$, $x \in M$. Let $E_{1},\ldots, E_{n}$ be orthonormal vector fields which are {\it{principal directions}} and let $k_{1},\ldots, k_{n}$ be the {\it{principal curvatures}} of $M$, i.e.,
$g_{x}(E_{i},E_{j})=\delta_{ij}$   and   $II_{x}(E_{i},E_{j})=k_{i}(x)\delta_{ij}$, 
$x \in M$, $1\leq i \leq n$ and $1\leq j \leq n$. We will denote the {\it{mean curvature}} by $H(x)=\sum_{i=1}^{n}k_{i}(x)$.
When the principal curvatures $k_{i}$ of $M$ are constant, for all $i=1,...,n$,  $M$ is an {\it{isoparametric hypersurface}}.

\begin{definition}\label{res.3}{\rm
Let $F: M^{n} \longrightarrow \qe$ be an oriented hypersurface with unit normal vector field $N$. The {\em parallel hypersurface} to $F(M)$ at a distance $\mu \in \R$ (with sign) is the map $ {F}^{\mu}: M^{n} \longrightarrow \qe$ such that, for each $x \in M^{n}$, 
$F^\mu (x)$ is the point at distance $\mu$ along the geodesic in $\qe$  starting 
at $F(x)$, with tangent vector $N(F(x))$.  
} 
\end{definition}

Considering the following notation
\begin{eqnarray}\label{eq.p3}
C_{\varepsilon}(\mu)= \left\{\begin{array}{ll}
\cos \mu,	& \mbox{if} ~~ \varepsilon = 1, \\ 
1,	& \mbox{if} ~~ \varepsilon = 0, \\
\cosh \mu,	& \mbox{if} ~~ \varepsilon = -1,
\end{array} 
\right. 
\mbox{ and ~~} 
S_{\varepsilon}(\mu)= \left\{\begin{array}{ll}
\sin \mu,	& \mbox{if} ~~ \varepsilon = 1, \\ 
\mu,	& \mbox{if} ~~ \varepsilon = 0, \\
\sinh \mu,	& \mbox{if} ~~ \varepsilon = -1, 
\end{array} 
\right.  
\end{eqnarray}
 the parallel hypersurface $F^\mu(M^{n})$ in $\qe$ are given by 
\begin{eqnarray}\label{eq.p4}
F^\mu (x)=	C_{\varepsilon}(\mu)F(x)+S_{\varepsilon}(\mu)N(F(x)),
\end{eqnarray}
where $\mu \in \R$ and $x \in M^{n}$. Now, we recall some basic facts about the parallel hypersurface $F^\mu(M^{n})$.

\begin{lemma}\label{res.4}
\noindent $(i)$ The unit normal vector field $ {N}^{\mu}$ of $F^\mu (M)$ is given by
\[
 {N}^{\mu}(x)=-\varepsilon S_\varepsilon(\mu)F(x)+C_\varepsilon(\mu)N(F(x)). \]

\noindent $(ii)$ If $E_{1},\ldots, E_{n} \subset T_{x}M$ is an orthonormal basis of eigenvectors of the second fundamental form $II_{x}$ of $F(M)$ in $x \in M$ and $k_{1},\ldots, k_{n}$ are the principal curvatures of $F(M)$ in $x \in M$ with respect to $N$, then the first fundamental form is given by 
\[
{g}^{\mu}_{x}(E_{i},E_{j})=\left[C_\varepsilon(\mu)-k_{i}(x) S_\varepsilon(\mu)\right]^{2} \delta_{ij}.  \nonumber
\]

\noindent $(iii)$ The second fundamental form ${II}^{\mu}_{x}$ of $F^\mu$ at $x \in M$ is given by
\[
{II}^{\mu}_{x}(E_{i},E_{j})=\left[C_\varepsilon(\mu)-k_{i}(x) S_\varepsilon(\mu)\right] \left[\varepsilon S_\varepsilon(\mu)+k_{i}(x) C_\varepsilon(\mu)\right] \delta_{ij}. 
\]

\noindent $(iv)$  The principal curvatures ${k}_{i}^{\mu}(x)$, $i=1$, $2$,..., $n$, of $F^\mu$ at $x \in M$, with respect to ${N}^{\mu}$, are given by
\[
{k}_{i}^{\mu}(x)=\frac{\varepsilon S_{\varepsilon}(\mu)+k_{i}(x)C_{\varepsilon}(\mu)}{C_{\varepsilon}(\mu)-k_{i}(x)S_{\varepsilon}(\mu)}.
 \]
\end{lemma}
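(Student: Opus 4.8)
The plan is to reduce all four statements to a single computation: the differential of the map $F^{\mu}$ from \eqref{eq.p4}, written in the principal frame $E_{1},\dots,E_{n}$ of $F(M)$. I use the standard models: for $\varepsilon=\pm1$ one has $\qe\subset\E^{n+2}$ or $\mathbb{L}^{n+2}$ with $\langle F,F\rangle=\varepsilon$, $\langle F,N\rangle=0$, $\langle N,N\rangle=1$, the position vector $F$ spanning the normal line of $\qe$ in the ambient space; for $\varepsilon=0$ one works directly in $\E^{n+1}$, where every term carrying an explicit factor $\varepsilon$ vanishes and the conditions relative to the $\qe$-normal are vacuous. Let $D$ denote the flat ambient connection (the ordinary directional derivative), so $D_{E_{i}}F=dF_{x}(E_{i})$, which I abbreviate to $E_{i}$. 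Differentiating $\langle N,N\rangle\equiv1$, $\langle N,F\rangle\equiv0$ and $\langle F,F\rangle\equiv\varepsilon$ along $E_{i}$ shows that $D_{E_{i}}N$ is orthogonal to $N$ and to $F$, hence tangent to $M$; together with $II_{x}(E_{i},E_{j})=-\langle D_{E_{i}}N,E_{j}\rangle=k_{i}\delta_{ij}$ this gives the Weingarten identity $D_{E_{i}}N=-k_{i}(x)E_{i}$. I also record the elementary identities $C_{\varepsilon}'=-\varepsilon S_{\varepsilon}$, $S_{\varepsilon}'=C_{\varepsilon}$ and $C_{\varepsilon}(\mu)^{2}+\varepsilon S_{\varepsilon}(\mu)^{2}=1$, each checked case by case from \eqref{eq.p3}.

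Differentiating \eqref{eq.p4} in $x$ with $\mu$ fixed and inserting the Weingarten identity gives
\[
dF^{\mu}_{x}(E_{i})=C_{\varepsilon}(\mu)\,D_{E_{i}}F+S_{\varepsilon}(\mu)\,D_{E_{i}}N=\big[C_{\varepsilon}(\mu)-k_{i}(x)S_{\varepsilon}(\mu)\big]E_{i},
\]
which is the engine of the whole lemma. It immediately yields $(ii)$, since $g^{\mu}_{x}(E_{i},E_{j})=\langle dF^{\mu}_{x}(E_{i}),dF^{\mu}_{x}(E_{j})\rangle=[\,C_{\varepsilon}(\mu)-k_{i}(x)S_{\varepsilon}(\mu)\,]^{2}\delta_{ij}$, and it shows that $E_{1},\dots,E_{n}$ is again an orthogonal frame for $F^{\mu}$. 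For $(i)$ I set $\widetilde N:=-\varepsilon S_{\varepsilon}(\mu)F+C_{\varepsilon}(\mu)N$ and check it is the unit normal of $F^{\mu}$: $\langle\widetilde N,\widetilde N\rangle=\varepsilon S_{\varepsilon}(\mu)^{2}+C_{\varepsilon}(\mu)^{2}=1$ by the trigonometric identity; $\langle\widetilde N,dF^{\mu}_{x}(E_{i})\rangle=0$ since $E_{i}\perp F,N$; and, for $\varepsilon=\pm1$, $\langle\widetilde N,F^{\mu}\rangle=(1-\varepsilon^{2})S_{\varepsilon}(\mu)C_{\varepsilon}(\mu)=0$, so $\widetilde N$ is tangent to $\qe$ along $F^{\mu}$ (for $\varepsilon=0$, $\widetilde N=N$ and this is automatic). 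As $\widetilde N$ varies continuously with $\mu$ and equals $N$ at $\mu=0$, it is the normal compatible with the chosen orientation.

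For $(iii)$ and $(iv)$ I differentiate the expression from $(i)$,
\[
D_{E_{i}}N^{\mu}=-\varepsilon S_{\varepsilon}(\mu)\,D_{E_{i}}F+C_{\varepsilon}(\mu)\,D_{E_{i}}N=-\big[\varepsilon S_{\varepsilon}(\mu)+k_{i}(x)C_{\varepsilon}(\mu)\big]E_{i}.
\]
This vector is a multiple of $E_{i}$, hence tangent to $M$ and so tangent to $\qe$ at $F^{\mu}(x)$; therefore the shape operator of $F^{\mu}$ inside $\qe$ is computed from $D$ with no additional projection term, and
\[
II^{\mu}_{x}(E_{i},E_{j})=-\langle D_{E_{i}}N^{\mu},dF^{\mu}_{x}(E_{j})\rangle=\big[C_{\varepsilon}(\mu)-k_{i}(x)S_{\varepsilon}(\mu)\big]\big[\varepsilon S_{\varepsilon}(\mu)+k_{i}(x)C_{\varepsilon}(\mu)\big]\delta_{ij},
\]
which is $(iii)$. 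Finally $(iv)$ is immediate: the matrices of $g^{\mu}_{x}$ and $II^{\mu}_{x}$ in the frame $\{E_{i}\}$ are simultaneously diagonal, so each $E_{i}$ is a principal direction of $F^{\mu}$, with principal curvature
\[
k^{\mu}_{i}(x)=\frac{II^{\mu}_{x}(E_{i},E_{i})}{g^{\mu}_{x}(E_{i},E_{i})}=\frac{\varepsilon S_{\varepsilon}(\mu)+k_{i}(x)C_{\varepsilon}(\mu)}{C_{\varepsilon}(\mu)-k_{i}(x)S_{\varepsilon}(\mu)},
\]
valid at every $(x,\mu)$ with $C_{\varepsilon}(\mu)-k_{i}(x)S_{\varepsilon}(\mu)\neq0$, i.e.\ wherever $F^{\mu}$ is an immersion.

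The computation is essentially routine; the only points that need care are the uniform bookkeeping across the three ambient models (in particular the disappearance of the $\varepsilon$-terms and of the position vector in the Euclidean case), the observation that the flat ambient derivative already computes the intrinsic first and second fundamental forms because the relevant derivatives take values in $T\qe$, and fixing the orientation of $N^{\mu}$ by continuity in $\mu$. I do not anticipate a genuine obstacle.
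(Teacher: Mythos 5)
Your computation is correct and is the standard derivation of these facts; the paper itself states Lemma \ref{res.4} without proof, as recalled background, and your argument (differentiate \eqref{eq.p4} in the principal frame, use the Weingarten identity and $C_\varepsilon^2+\varepsilon S_\varepsilon^2=1$, then read off $g^\mu$, $N^\mu$, $II^\mu$ and $k_i^\mu$) is exactly the expected one. The bookkeeping across the three models, including the tangency checks that justify using the flat ambient derivative, is handled correctly.
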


\begin{lemma}[Cartan \cite{Cartan}]\label{res.5}
Let ${F}^{\mu}: M^{n} \longrightarrow \qe$, $\mu \in (-\delta, \delta)$ with $\delta>0$, a family of parallel hypersurfaces. Then, $F(M)$ is isoparametric if and only if the mean curvature ${H}^{\mu}(x)$ of $ {F}^{\mu}(M)$ does not depend on $x \in M$, $\forall \mu \in (-\delta, \delta)$.
\end{lemma}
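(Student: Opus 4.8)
The plan is to prove both directions by exploiting the explicit formulas in Lemma \ref{res.4}(ii) and (iv), together with the fact that for a family of parallel hypersurfaces the principal directions $E_1,\dots,E_n$ at $x$ are the same for all $\mu$ (they diagonalize $II^\mu_x$ simultaneously by part (iii)), and the principal curvatures $k_i^\mu(x)$ are obtained from $k_i(x)$ by the Möbius-type transformation in part (iv).

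For the forward implication, assume $F(M)$ is isoparametric, so each $k_i(x)\equiv k_i$ is constant in $x$. Then by Lemma \ref{res.4}(iv) each $k_i^\mu(x) = \dfrac{\varepsilon S_\varepsilon(\mu)+k_i C_\varepsilon(\mu)}{C_\varepsilon(\mu)-k_i S_\varepsilon(\mu)}$ is a function of $\mu$ alone, hence so is $H^\mu(x)=\sum_{i=1}^n k_i^\mu(x)$; this direction is immediate.

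For the converse, suppose $H^\mu(x)=\sum_i k_i^\mu(x)$ is independent of $x\in M$ for every $\mu\in(-\delta,\delta)$. The idea is to expand $H^\mu(x)$ as a power series in $\mu$ about $\mu=0$ and read off, order by order, that all elementary symmetric functions (equivalently all power sums) of $k_1(x),\dots,k_n(x)$ are constant in $x$, which forces the multiset $\{k_i(x)\}$ to be constant. Concretely, using $C_\varepsilon(\mu)=1+O(\mu^2)$ and $S_\varepsilon(\mu)=\mu+O(\mu^3)$, one computes from Lemma \ref{res.4}(iv) that
\[
k_i^\mu(x) = \bigl(\varepsilon S_\varepsilon(\mu)+k_i(x)C_\varepsilon(\mu)\bigr)\sum_{j\ge 0}\bigl(k_i(x)S_\varepsilon(\mu)\bigr)^j\bigl/C_\varepsilon(\mu)^{\,j+1},
\]
so that the coefficient of $\mu^\ell$ in $k_i^\mu(x)$ is a polynomial in $k_i(x)$ whose leading term (in $k_i(x)$) is $k_i(x)^{\ell+1}$ up to a nonzero constant, plus lower-order-in-$k_i$ corrections coming from the Taylor coefficients of $C_\varepsilon,S_\varepsilon$ (and the $\varepsilon$ term). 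Summing over $i$, the coefficient of $\mu^\ell$ in $H^\mu(x)$ is $p_{\ell+1}(x)$, the $(\ell+1)$-st power sum $\sum_i k_i(x)^{\ell+1}$, plus a universal polynomial in the lower power sums $p_1(x),\dots,p_\ell(x)$. Since $H^\mu(x)$ is independent of $x$, every such coefficient is independent of $x$; arguing inductively on $\ell$ (the base case $\ell=0$ giving $p_1(x)=H(x)$ constant), we conclude $p_k(x)$ is constant in $x$ for all $k=1,\dots,n$. By Newton's identities the elementary symmetric functions $\sigma_1(x),\dots,\sigma_n(x)$ of the $k_i(x)$ are then constant, so the characteristic polynomial of the shape operator is $x$-independent, hence the (unordered) set of principal curvatures is constant on $M$; thus $F(M)$ is isoparametric.

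The main obstacle I expect is purely bookkeeping: one must verify that the $\mu^\ell$-coefficient of $H^\mu(x)$ really does determine $p_{\ell+1}(x)$ modulo lower power sums, i.e. that the ``leading in $k_i$'' term does not get cancelled by the Taylor corrections of $C_\varepsilon$ and $S_\varepsilon$ or by the $\varepsilon$-term — this is where the three cases $\varepsilon\in\{1,0,-1\}$ must be checked, though in all three $S_\varepsilon'(0)=1\neq 0$ and $C_\varepsilon(0)=1$, which is what makes the induction go through uniformly. An alternative, perhaps cleaner route avoiding power series is to fix two points $x,y\in M$ and observe that by Lemma \ref{res.4}(iv) the rational function $\mu\mapsto H^\mu(x)-H^\mu(y)$ vanishes identically on $(-\delta,\delta)$; clearing denominators gives a polynomial identity in $C_\varepsilon(\mu),S_\varepsilon(\mu)$, and comparing it (using that $1,S_\varepsilon,S_\varepsilon^2,\dots$ are linearly independent as functions, together with $C_\varepsilon^2=1-\varepsilon S_\varepsilon^2$) forces the symmetric functions of $\{k_i(x)\}$ and $\{k_i(y)\}$ to agree. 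I would present the power-series argument as the main line and remark on this alternative.
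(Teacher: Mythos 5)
The paper does not actually prove this lemma: it is stated as a quoted result of Cartan, with only the citation \cite{Cartan} offered in its support. So there is no ``paper proof'' to compare against, and your argument has to be judged on its own. On that basis it is correct and gives a clean, self-contained proof. The forward direction is indeed immediate from Lemma \ref{res.4}(iv). For the converse, your power-series induction works, and the bookkeeping worry you flag can be dispatched cleanly by changing variables to $u=S_\varepsilon(\mu)/C_\varepsilon(\mu)$: from Lemma \ref{res.4}(iv) one gets $k_i^\mu=(k_i+\varepsilon u)/(1-k_iu)=\sum_{j\ge0}\bigl(k_i^{\,j+1}+\varepsilon k_i^{\,j-1}\bigr)u^{j}$ (with the convention $k_i^{-1}u^0$ term absent), so the coefficient of $u^{\ell}$ in $H^\mu(x)$ is exactly $p_{\ell+1}(x)+\varepsilon p_{\ell-1}(x)$ with no $\varepsilon$-dependent cancellation to check; since $u=\mu+O(\mu^3)$ in all three geometries, passing from the $u$-expansion to the $\mu$-expansion only mixes in lower-order coefficients, and your induction on $\ell$ goes through verbatim. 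Two small points worth making explicit in a final write-up: (i) the expansion is only used to compute Taylor coefficients at $\mu=0$, so convergence of the geometric series for all $\mu\in(-\delta,\delta)$ is irrelevant (this matters if $M$ is noncompact and the $k_i$ are a priori unbounded); (ii) constancy of $p_1,\dots,p_n$ gives, via Newton's identities, a fixed characteristic polynomial for the shape operator at every point, hence the \emph{sorted} principal curvature functions are constant on $M$, which is precisely the definition of isoparametric used in Section \ref{preliminaries}. Your alternative route (clearing denominators in $H^\mu(x)-H^\mu(y)\equiv0$ and using the algebraic independence relations of $C_\varepsilon,S_\varepsilon$) is also viable and arguably closer in spirit to Cartan's original argument.
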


We will now consider a special class of solution to the IMCF: namely the solutions given by a family of parallel hypersurfaces in space forms. From now on, we focus on this type of solutions.

\begin{definition} {\rm
Let ${F}^t:M^{n}  \longrightarrow \qe$, $t\in I$,  be a solution to the IMCF in $\qe$ with initial data $F:M^{n} \longrightarrow \qe$. We say that ${F}^t$ is a {\em solution to the IMCF by parallel hypersurfaces} if there is a function $\mu: I \longrightarrow \R$ such that
\begin{eqnarray}\label{eq.p12}
{F}^{t}(x)= C_{\varepsilon}(\mu(t))F(x)+S_{\varepsilon}(\mu(t))N(x),
\end{eqnarray}
$\forall t \in I$, where $C_{\varepsilon}$ and $S_\varepsilon$ are the functions defined in $(\ref{eq.p3})$.}
\end{definition}

\section{Main results}\label{Main results}

In this section we state our main results. The proofs will be given in Section \ref{Proofs}.

\begin{theorem}\label{res.16}
Let $F:M^{n} \longrightarrow \qe$ be a connected  oriented hypersurface with inward   unit normal vector field $N$, i.e., such that the mean curvature  satisfies $H(x)>0$, $\forall x \in M$. Then, $F(M)$ is an initial data of a solution to the IMCF by parallel hypersurfaces if and only if $F(M)$ is an isoparametric hypersurface.
Moreover, for such a hypersurface,   
 the solution to the IMCF by parallel hypersurfaces with initial data $F(M)$ is given by
\begin{eqnarray}\label{eq.p.113}
 {F}^{t}(x)= C_{\varepsilon}(\mu(t))F(x)+S_{\varepsilon}(\mu(t))N(x),
\end{eqnarray}
$\forall (x,t) \in M \times I$, where $0 \in I \subset \R$, $C_{\varepsilon}$ and $S_{\varepsilon}$ are the functions given by $(\ref{eq.p3})$ and $\mu : I \longrightarrow \R$ is a smooth real function satisfying $\mu(0)=0$ and
\begin{eqnarray}\label{eq.p.114}
\prod_{i=1}^{n}\left( \, C_{\varepsilon}(\mu(t))-k_{i}S_{\varepsilon}(\mu(t))\, \right) = e^{t}, ~~ \forall t \in I, 
\end{eqnarray}
where $k_{i},\, i=1,...,n$ are the principal curvatures of $F(M)$, associated to $N$. 
\end{theorem}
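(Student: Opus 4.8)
The plan is to compute the evolution equation for a family of parallel hypersurfaces directly and match it against \eqref{eqIMCF}. First I would assume ${F}^t$ has the parallel form \eqref{eq.p12}, i.e.\ ${F}^t(x)=C_\varepsilon(\mu(t))F(x)+S_\varepsilon(\mu(t))N(x)$ with $\mu(0)=0$, and differentiate in $t$. Using $C_\varepsilon'=-\varepsilon S_\varepsilon$ and $S_\varepsilon'=C_\varepsilon$, one gets
\[
\frac{\partial {F}^t}{\partial t}(x)=\mu'(t)\bigl(-\varepsilon S_\varepsilon(\mu(t))F(x)+C_\varepsilon(\mu(t))N(x)\bigr)=\mu'(t)\,{N}^{\mu(t)}(x),
\]
where the last equality is exactly Lemma \ref{res.4}$(i)$: the velocity of the parallel flow is always normal, with speed $\mu'(t)$. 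Hence ${F}^t$ solves the IMCF if and only if $\mu'(t)=-1/{H}^{\mu(t)}(x)$ for all $(x,t)$, where ${H}^{\mu(t)}$ is the mean curvature of the parallel hypersurface computed from Lemma \ref{res.4}$(iv)$ as ${H}^{\mu(t)}(x)=\sum_{i=1}^n {k}_i^{\mu(t)}(x)$.

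The next step is to extract the isoparametric characterization. Since the left-hand side $\mu'(t)$ does not depend on $x$, the flow exists by parallel hypersurfaces only if ${H}^{\mu(t)}(x)$ is independent of $x$ for every $t$ in a neighborhood of $0$; by Lemma \ref{res.5} (Cartan) this forces $F(M)$ to be isoparametric. Conversely, if $F(M)$ is isoparametric the $k_i$ are constants, so ${H}^{\mu}(x)$ depends only on $\mu$, and the condition $\mu'(t)=-1/{H}^{\mu(t)}$ becomes a genuine autonomous ODE for $\mu$; since ${H}^0=H>0$ by hypothesis, this ODE has a unique smooth solution with $\mu(0)=0$ on some interval $I\ni 0$, producing the desired solution to the IMCF. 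For one direction I should also check that a solution to the IMCF \emph{by parallel hypersurfaces} is by definition of the form \eqref{eq.p12}, so the computation above applies verbatim.

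Finally I would derive the algebraic equation \eqref{eq.p.114} from the ODE. The key observation is to recognize $\mu'(t)\,{H}^{\mu(t)}$ as a logarithmic derivative. From Lemma \ref{res.4}$(ii)$, the volume element of $F^{\mu}$ is $\prod_{i=1}^n\bigl(C_\varepsilon(\mu)-k_iS_\varepsilon(\mu)\bigr)$ (up to the fixed volume element of $F$), and a direct computation shows
\[
\frac{d}{dt}\ln\prod_{i=1}^{n}\bigl(C_\varepsilon(\mu(t))-k_iS_\varepsilon(\mu(t))\bigr)
=\sum_{i=1}^n\frac{-\varepsilon S_\varepsilon(\mu(t))-k_iC_\varepsilon(\mu(t))}{C_\varepsilon(\mu(t))-k_iS_\varepsilon(\mu(t))}\,\mu'(t)
=-{H}^{\mu(t)}(x)\,\mu'(t)=1,
\]
using Lemma \ref{res.4}$(iv)$ and the IMCF equation $\mu'(t)=-1/{H}^{\mu(t)}$. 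Integrating from $0$ to $t$ and using $\mu(0)=0$ (so the product equals $1$ at $t=0$) yields $\prod_{i=1}^{n}\bigl(C_\varepsilon(\mu(t))-k_iS_\varepsilon(\mu(t))\bigr)=e^{t}$, which is \eqref{eq.p.114}. Conversely, differentiating this algebraic identity recovers the ODE, so \eqref{eq.p.114} together with $\mu(0)=0$ characterizes the flow. The main obstacle I anticipate is purely bookkeeping: being careful that the parallel map $F^{\mu(t)}$ is a genuine immersion on the relevant interval (which needs $C_\varepsilon(\mu)-k_iS_\varepsilon(\mu)\neq 0$ for all $i$, equivalently $e^t\neq 0$, which is automatic) and that the sign conventions for $N$, $H>0$, and the ``inward'' normal are consistent so that $-1/H<0$ indeed matches $\mu'(0)<0$; the analytic content is otherwise elementary once Lemma \ref{res.4} and Lemma \ref{res.5} are in hand.
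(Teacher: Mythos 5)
Your proposal is correct and follows essentially the same route as the paper: differentiate the parallel family, identify the normal velocity $\mu'(t)N^{t}$ via Lemma \ref{res.4}$(i)$, deduce $\mu'(t)=-1/H^{t}(x)$ so that Cartan's Lemma \ref{res.5} forces $F(M)$ to be isoparametric, and in the converse direction solve the resulting autonomous ODE. The only (welcome) difference is that you spell out the logarithmic-derivative integration that turns the ODE into the algebraic relation \eqref{eq.p.114}, a step the paper merely asserts.
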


From now on, using the well known properties of the isoparametric hypersurfaces in space forms and Theorem $\ref{res.16}$, we will describe in detail the solutions to the IMCF by parallel hypersurfaces and their behavior on the maximal interval where they are defined.  

We start with the hypersurfaces of the Euclidean space. Let  $F:M^{n} \longrightarrow \E^{n+1}$ be an isoparametric hypersurface in the Euclidean space, then Segre \cite{Segre} proved that $M$ is isometric to one of the three following hypersurfaces: 
i) $\E^{n}$, with all the principal curvatures equal to $0$;
 ii) $\mathbb{S}^{n}_{r_0}$, an $n$-dimensional sphere of radius $r_0$,  with all the principal curvatures equal to $k=1/r_0$;
 iii) $\mathbb{S}^{m}_{r_0} \times \E^{n-m}$, a cylinder with $m$ principal curvatures equal to $k_{1}=1/r_0$ and $n-m$ principal curvatures equal to $0$.

Using Theorem $\ref{res.16}$, our  theorem  describes the IMCF for the spheres and for the cylinders in the euclidean space. 

\begin{theorem}\label{res.12novo}
	Let $F: \mathbb {S}^{m}_{r_0} \times \E^{n-m} \longrightarrow \E^{n+1}$, $m \neq 0$,  be an isometric immersion of a sphere, when $m=n$ or a cylinder when $m\neq n$, in the Euclidean space $\E^{n+1}$, with inward unit normal vector field $N$. Let $m$ principal curvatures be equal to $k=1/r_0$ and $n-m$ null principal curvatures, when $m\neq n$. 
 Then, the solution to the IMCF by parallel hypersurfaces with initial condition $F$ is  eternal and it is given by \\ 
 (i) If $m=n$,  
 \begin{eqnarray}
{F}^{t}(x)=e^{t/n}F(x), \nonumber, \qquad \forall  t \in \R, \mbox{ and } x \in \mathbb {S}^{n}_{r_{0}}.  
\end{eqnarray}
Moreover,  ${F}^{t}(\mathbb{S}^{n}_{r_{0}})$ is a sphere centered at the origin with radius $r(t)=r_{0}e^{t/n}$ that converges to a point, when $t\rightarrow 
 -\infty$.  \\ 
(ii) If $m\neq n$ 
 \begin{eqnarray}
	{F}^{t}(x,y)=F(x,y)+r_0 (1-e^{t/m})N(x,y), \nonumber \qquad \forall t \in \R,  (x,y) \in \mathbb {S}^{m}_{r_0} \times \E^{n-m}.
	\end{eqnarray}
Moreover, 	${F}^{t}(\mathbb{S}^{m}_{r_0} \times \E^{n-m})$    is a cylinder whose base has radius $r(t)=r_{0}e^{t/m}$ and it converges to an $(n-m)$-dimensional Euclidean subspace of $\E^{n+1}$, when  $t\rightarrow -\infty$.   
\end{theorem}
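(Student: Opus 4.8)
The plan is to apply Theorem \ref{res.16} directly. Since the sphere $\mathbb{S}^n_{r_0}$ and the cylinder $\mathbb{S}^m_{r_0}\times\E^{n-m}$ are isoparametric hypersurfaces of $\E^{n+1}$, the theorem guarantees that the IMCF by parallel hypersurfaces exists, is of the form \eqref{eq.p.113}, and that the distance function $\mu(t)$ is determined by the algebraic equation \eqref{eq.p.114}. In the Euclidean case $\varepsilon=0$ we have $C_0(\mu)=1$ and $S_0(\mu)=\mu$, so \eqref{eq.p.114} becomes
\begin{equation}\label{eqprop}
\prod_{i=1}^{n}\bigl(1-k_i\,\mu(t)\bigr)=e^{t}.
\end{equation}
For the sphere ($m=n$), all $k_i=k=1/r_0$, so this reads $(1-\mu/r_0)^n=e^{t}$, giving $\mu(t)=r_0\bigl(1-e^{t/n}\bigr)$; for the cylinder, only $m$ of the factors are nontrivial, so $(1-\mu/r_0)^m=e^{t}$ and $\mu(t)=r_0\bigl(1-e^{t/m}\bigr)$. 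In both cases $\mu$ is a smooth function of $t$ defined for all $t\in\R$ with $\mu(0)=0$, so the maximal interval is $I=\R$ and the solution is eternal. Substituting $C_0(\mu)=1$, $S_0(\mu)=\mu(t)$ into \eqref{eq.p.113} yields $F^t(x)=F(x)+\mu(t)N(x)$, which is the stated formula in each case (using $F^t(x)=F(x)+r_0(1-e^{t/n})N(x)$; for $m=n$ one checks $F(x)+r_0(1-e^{t/n})N(x)=e^{t/n}F(x)$ because the inward normal to the sphere of radius $r_0$ centered at the origin is $N(x)=-F(x)/r_0$).

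Next I would identify the evolving hypersurfaces geometrically. By Lemma \ref{res.4}(ii) with $\varepsilon=0$, the first fundamental form of $F^t$ scales each principal direction by $(1-k_i\mu(t))^2$; for the sphere every factor equals $e^{2t/n}$, confirming that $F^t(\mathbb{S}^n_{r_0})$ is a round sphere of radius $r(t)=r_0 e^{t/n}$ centered at the origin, which shrinks to the origin as $t\to-\infty$ and expands without bound as $t\to+\infty$. For the cylinder, the $m$ spherical directions scale by $e^{2t/m}$ while the $n-m$ flat directions are unchanged, so $F^t$ is again a cylinder $\mathbb{S}^m_{r(t)}\times\E^{n-m}$ with $r(t)=r_0 e^{t/m}$; as $t\to-\infty$ the spherical factor collapses and the image converges to the $(n-m)$-dimensional Euclidean subspace spanned by the axis directions, while as $t\to+\infty$ the radius grows unboundedly.

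The only point requiring a little care — and the main (mild) obstacle — is verifying that the mean curvature stays positive throughout $I=\R$, which is required for the IMCF \eqref{eqIMCF} to make sense. Using Lemma \ref{res.4}(iv) with $\varepsilon=0$, the nonzero principal curvatures of $F^t$ are $k_i^t=k_i/(1-k_i\mu(t))=(1/r_0)e^{-t/m}$ (or $e^{-t/n}$ for the sphere), which are strictly positive for all $t\in\R$; hence $H^t=m\,e^{-t/m}/r_0>0$ (respectively $H^t=n\,e^{-t/n}/r_0>0$), and in particular no factor $1-k_i\mu(t)$ ever vanishes, so $F^t$ is a genuine immersed hypersurface for every $t$ and the flow is indeed eternal. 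This also confirms that the orientation by the inward normal is preserved. Combining these observations gives both parts (i) and (ii) of the theorem.
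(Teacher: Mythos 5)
Your proposal is correct and follows essentially the same route as the paper: apply Theorem \ref{res.16}, solve the algebraic equation $\prod_i(1-k_i\mu(t))=e^{t}$ for $\mu(t)$ in each case, and use $N=-F/r_0$ (resp. the product structure of the cylinder) together with Lemma \ref{res.4} to identify the evolving hypersurfaces and their limits as $t\to-\infty$. Your explicit verification that $H^{t}=(m/r_0)e^{-t/m}>0$ for all $t\in\R$ is a small welcome addition that the paper leaves implicit.
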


Now we consider the isoparametric hypersurfaces  of the hyperbolic space 
$\mathbb{H}^{n+1}$.   Cartan \cite{Cartan} proved that any such hypersurface  is isometric to one of the following hypersurfaces:
i)
$\mathbb{H}^{n}$, with all the principal curvatures equal to $k=0$;
 ii) $\E^{n}$, with all the principal curvatures equal to $k=\pm1$;
   iii) $\mathbb{H}^{n}$, with all the principal curvatures equal to $k$, where $0<|k|<1$;
  iv) $\mathbb{S}^{n}$, with all the principal curvatures equal to $k$, where $|k|>1$;
v) $\mathbb{S}^{m} \times \mathbb{H}^{n-m}$, with $m$ principal curvatures equal to $k_{1}$ and $n-m$ principal curvatures equal to $k_{2}$, where $k_{1}k_{2}=1$.  The  description of the IMCF of the isoparametric hypersurfaces of $\mathbb{H}^{n+1}$ of type ii), iii) and iv) are given in Theorems $\ref{res.11}$ and  $\ref{res.22}$ below. In Theorem $\ref{res.17}$, we consider the IMCF in case v), with the additional assumption that the two distinct principal curvatures have the same multiplicity $m$, i.e., $n=2m$.
In these theorems, the IMCF is defined on a maximal interval $I=(t_0, +\infty)$, where 
$t_0\geq -\infty$. In order to describe the limiting behavior of the flow when 
$t\rightarrow +\infty$, we consider the conformal isometry $G$ between $\mathbb{H}^{n+1}\subset \mathbb{L}^{n+2}$ and the Poincar\'e ball model 
of the hyperbolic space $\mathbb{B}^{n+1}\subset \mathbb{E}^{n+1}$ (see proof of Lemma 
\ref{res.23}) and we define the conformal boundary of the hyperbolic space,  $\partial \mathbb{H}^{n+1}$, as the inverse image of the boundary of the Poincar\'e ball, i.e.,  
$\partial \mathbb{H}^{n+1}=G^{-1}(\partial \mathbb{B}^{n+1})$.

\begin{theorem}\label{res.11}
Let $F: \E^{n} \longrightarrow \mathbb{H}^{n+1} \subset \mathbb{L}^{n+2}$ be an isometric immersion of a horosphere in the hyperbolic space, with 
 unit normal vector field $N$ with positive mean curvature and all principal curvatures equal to $k=\pm 1$. Then, the solution to the IMCF, by parallel hypersurfaces with initial condition $F$, is given by 
\begin{eqnarray}\label{eq.p.293}
{F}^{t}(x)=\cosh\left(-k\frac{t}{n}\right)F(x)+\sinh\left(-k\frac{t}{n}\right)N(x), \nonumber
\end{eqnarray}
$\forall t \in \R$ and $x\in \E^n$, i.e., it  is an eternal solution.  Moreover, for each $t\in \R$, ${F}^{t}(\E^{n})$ is a horosphere,  that  converges to a point when $t\rightarrow -\infty$  and it converges to the conformal boundary of $\mathbb{H}^{n+1}$, $\partial\mathbb{H}^{n+1}$, when $t \rightarrow +\infty$.
\end{theorem}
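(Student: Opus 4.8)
The plan is to apply Theorem \ref{res.16} directly to the horosphere, for which $g=1$, all $n$ principal curvatures equal $k=\pm1$, and hence (since $H>0$) we must have $k=-1/ n \cdot$... no: more precisely, the mean curvature of the horosphere is $H = nk$, and the requirement $H>0$ forces the choice of inward normal for which $k=+1$ if we take $H=n$; but Cartan's list records the curvature as $k=\pm1$ depending on orientation, so I keep the sign $k$ generic and note $\operatorname{sign}(H)=\operatorname{sign}(k)$. Since the ambient is $\mathbb H^{n+1}$ we have $\varepsilon=-1$, so $C_\varepsilon=\cosh$, $S_\varepsilon=\sinh$. First I would write out equation \eqref{eq.p.114}: with all $k_i=k$ it reads $(\cosh\mu(t)-k\sinh\mu(t))^n=e^t$. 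The hard part is solving this for $\mu(t)$ and checking the solution is global. When $k=1$ one has $\cosh\mu-\sinh\mu=e^{-\mu}$, so the equation becomes $e^{-n\mu(t)}=e^t$, giving $\mu(t)=-t/n = -kt/n$; when $k=-1$ one gets $e^{n\mu(t)}=e^t$, i.e. $\mu(t)=t/n=-kt/n$ as well. In both cases $\mu(t)=-kt/n$, which is defined and smooth for all $t\in\R$, satisfies $\mu(0)=0$, and keeps $\cosh\mu-k\sinh\mu=e^{t/n}>0$ throughout, so the parallel hypersurfaces $F^t$ are genuine immersions for every $t$ (by Lemma \ref{res.4}(ii) the induced metric is $(\cosh\mu-k\sinh\mu)^2=e^{2t/n}$ times the original, never degenerate). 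Substituting $\mu(t)=-kt/n$ into \eqref{eq.p.113} yields the displayed formula for $F^t$, and since $I=\R$ the solution is eternal.

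Next I would identify the evolving hypersurfaces. By Lemma \ref{res.4}(iv), the principal curvatures of $F^t$ are
\begin{eqnarray}
k_i^{\mu(t)}=\frac{-\sinh\mu(t)+k\cosh\mu(t)}{\cosh\mu(t)-k\sinh\mu(t)}
=\frac{k(\cosh\mu(t)-k\sinh\mu(t))}{\cosh\mu(t)-k\sinh\mu(t)}=k,\nonumber
\end{eqnarray}
using $k^2=1$. So every $F^t$ again has all principal curvatures equal to $k=\pm1$, hence by Cartan's classification (cited in the excerpt) each $F^t(\E^n)$ is again a horosphere in $\mathbb H^{n+1}$. This is the cleanest way to see the ``remains a horosphere'' claim without coordinates.

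Finally I would analyze the limits $t\to\pm\infty$. The most transparent route is to pass to an explicit model. Realizing $\mathbb H^{n+1}\subset\mathbb L^{n+2}$ concretely, a horosphere is a level set of a linear lightlike functional $\ell$ (the ``Busemann'' functional) restricted to the hyperboloid; parallel horospheres in a fixed family correspond to $\ell={\rm const}$, and the formula $F^t=\cosh(-kt/n)F+\sinh(-kt/n)N$ multiplies $\ell(F^t)$ by $e^{\mp t/n}$ (the sign matching $k$). As $t\to-\infty$ the horosphere ``shrinks'': in the upper half-space or ball model it becomes a Euclidean sphere whose radius tends to $0$, i.e. the flow converges to a point (the center of the horosphere). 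As $t\to+\infty$ the horosphere ``inflates'' toward the point at infinity that is its center — in the Poincaré ball model $\mathbb B^{n+1}$ it is a Euclidean sphere internally tangent to $\partial\mathbb B^{n+1}$ whose Euclidean radius tends to that of $\partial\mathbb B^{n+1}$, so under $G^{-1}$ the hypersurfaces converge to the conformal boundary $\partial\mathbb H^{n+1}=G^{-1}(\partial\mathbb B^{n+1})$. I would carry out the computation of $\ell(F^t)$ (a one-line linear computation using $\ell(N)=\mp\ell(F)$, which holds because $N$ is the gradient-type normal along the horosphere) and then invoke the conformal isometry $G$ fixed in the proof of Lemma \ref{res.23} to translate the ``radius $\to 0$'' and ``radius $\to$ full'' statements into the two claimed convergences.

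The main obstacle is purely expository rather than mathematical: making the $t\to\pm\infty$ limit statements precise requires committing to one concrete model of $\mathbb H^{n+1}$ and tracking how the one-parameter homothety $(\cosh s, \sinh s)$ acting on $(F,N)$ moves a horosphere within its center-fixed family; once the Busemann-functional computation $\ell(F^t)=e^{\mp t/n}\ell(F^0)$ is in hand, everything else is immediate from Lemma \ref{res.4}, Theorem \ref{res.16}, and the conformal identification $G$.
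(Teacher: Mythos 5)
Your proposal is correct and follows the same skeleton as the paper: invoke Theorem \ref{res.16} to reduce to the algebraic equation $(\cosh\mu-k\sinh\mu)^n=e^t$, solve for $\mu$, use Lemma \ref{res.4} to see that every $F^t(\E^n)$ is again a horosphere with induced metric $e^{2t/n}$ times the original, and appeal to the conformal identification $G$ for the $t\to+\infty$ limit. Two points of genuine divergence are worth noting. First, you solve the equation directly by writing $\cosh\mu-k\sinh\mu=e^{-k\mu}$ (valid since $k^2=1$), which gives $\mu(t)=-kt/n$ in one line; the paper instead squares the relation $k\sinh\mu=\cosh\mu-e^{t/n}$ and then identifies $\cosh\mu(t)=\cosh(\pm t/n)$, a slightly longer route to the same answer. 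Your version is cleaner and avoids the sign bookkeeping that squaring introduces. Second, for the asymptotics the paper argues $t\to-\infty$ via the degeneration $I^t_x(E_i,E_j)=e^{2t/n}\delta_{ij}\to 0$ and $t\to+\infty$ via Lemma \ref{res.23} (checking $\cosh\mu(t)\to+\infty$); you propose a Busemann-functional computation $\ell(F^t)=e^{\mp t/n}\ell(F^0)$ combined with the ball-model picture. That route is geometrically more informative (it identifies the limit point as the center of the horosphere at infinity), but as written it is only a sketch --- the identity $\ell(N)=\mp\ell(F)$ and the translation of ``Euclidean radius $\to 0$ or $\to 1$'' into the claimed convergences are asserted rather than carried out. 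Since you could equally well fall back on the paper's two mechanisms (which you already have in hand: the metric $e^{2t/n}\delta_{ij}$ and Lemma \ref{res.23} with $\cosh(t/n)\to\infty$), this is an expository gap rather than a mathematical one.
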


\begin{theorem}\label{res.22}
Let $F: M^{n} \longrightarrow \mathbb{H}^{n+1} \subset \mathbb{L}^{n+2}$ be a  connected isometric immersion of a totally umbilic hypersurface in the hyperbolic space, with  inward 
unit normal vector field $N$ and  all the principal curvatures equal to $k$, where $k \notin \{-1,0,1\}$. Then, the solution to the IMCF by parallel hypersurfaces with initial condition $F$ is given by 
\begin{eqnarray}
{F}^{t}(x)=\cosh(\mu(t))F(x)+\sinh(\mu(t))N(x), \nonumber
\end{eqnarray}
 where
\[
\cosh(\mu(t))=\frac{e^{t/n}-|k|\sqrt{q(t)}}{1-k^{2}}, \qquad  \sinh(\mu(t))=sgn(k)\frac{|k|e^{t/n}-\sqrt{q(t)}}{1-k^{2}}, 
\]
$\forall t \in I$ and $\forall x \in M$, and 
$$q(t)=e^{2t/n}+k^{2}-1.$$
\item $(i)$ If $0<|k|<1$, then $I=(t^{*},+\infty)$, where $t^{*}=\frac{n}{2}\ln(1-k^{2})$, i.e.,  ${F}^{t}$ is an immortal solution. Moreover, ${F}^{t}(M)$ converges to a totally geodesic hypersurface of $\mathbb{H}^{n+1}$ when $t \rightarrow t^*$ and it converges to $\partial\mathbb{H}^{n+1}$ when 
$t \rightarrow +\infty$.

\item $(ii)$ If $|k|>1$, then $I=\R$, i.e., ${F}^{t}$ is an eternal solution. Moreover, ${F}^{t}(M)$ converges to a point when $t \rightarrow -\infty$ and it converges to $\partial\mathbb{H}^{n+1}$ when $t \rightarrow +\infty$.
\end{theorem}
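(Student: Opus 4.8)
The plan is to combine Theorem \ref{res.16} with the elementary geometry of totally umbilic hypersurfaces of $\mathbb{H}^{n+1}$. Since $F(M)$ is totally umbilic with all $n$ principal curvatures equal to the constant $k$, it is isoparametric, so Theorem \ref{res.16} applies: the solution to the IMCF by parallel hypersurfaces with initial data $F$ has the form \eqref{eq.p.113} with $\varepsilon=-1$, where $\mu(0)=0$ and $\mu$ satisfies \eqref{eq.p.114}, which here reduces to $(\cosh\mu(t)-k\sinh\mu(t))^{n}=e^{t}$. The left-hand side equals $1$ at $t=0$ and depends continuously on $t$, so taking positive $n$-th roots yields the single equation
\[
\cosh\mu(t)-k\sinh\mu(t)=e^{t/n}.
\]
(Observe also that $H=nk>0$ forces $k>0$, hence $\mathrm{sgn}(k)=1$; I nevertheless keep the sign notation, as in the statement.)

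First I would solve this equation together with the identity $\cosh^{2}\mu-\sinh^{2}\mu=1$. Writing $c=\cosh\mu$, $s=\sinh\mu$ and eliminating $c=e^{t/n}+ks$ leads to the quadratic $(k^{2}-1)s^{2}+2ke^{t/n}s+(e^{2t/n}-1)=0$, which is genuinely quadratic since $k\neq\pm1$; a short computation shows its discriminant equals $4\,q(t)$, with $q(t)=e^{2t/n}+k^{2}-1$. Of the two roots, the one meeting the initial condition $s(0)=0$ is selected by noting $q(0)=k^{2}$ and $\sqrt{q(0)}=|k|$, which pins the sign in front of $\sqrt{q(t)}$ to be $\mathrm{sgn}(k)$; substituting back into $c=e^{t/n}+ks$ and simplifying (using $k\,\mathrm{sgn}(k)=|k|$) produces exactly the stated formulas for $\cosh\mu(t)$ and $\sinh\mu(t)$. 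A direct check then confirms $\cosh^{2}\mu-\sinh^{2}\mu=1$ and $\cosh\mu(t)>0$, so $\mu(t)$ is well defined, and it is smooth on any interval where $q(t)>0$.

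Next I would determine the maximal interval $I$. By Lemma \ref{res.4}, the parallel hypersurface $F^{\mu}$ is an immersion whenever $\cosh\mu-k\sinh\mu\neq0$, which here equals $e^{t/n}$ and hence never vanishes; the family is a genuine solution to the IMCF precisely while its mean curvature is positive. Combining Lemma \ref{res.4}(iv) with $\cosh\mu-k\sinh\mu=e^{t/n}$ and the (short) identity $k\cosh\mu-\sinh\mu=\mathrm{sgn}(k)\sqrt{q(t)}$ gives
\[
H^{t}=n\,k^{\mu(t)}=n\,\mathrm{sgn}(k)\,\frac{\sqrt{q(t)}}{e^{t/n}},
\]
so, since $k>0$, the requirement $H^{t}>0$ is equivalent to $q(t)>0$, i.e. to smoothness of $\mu$. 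If $|k|>1$, then $q(t)\geq k^{2}-1>0$ for all $t$, whence $I=\mathbb{R}$ and the solution is eternal. If $0<|k|<1$, then $q(t)>0$ exactly when $e^{2t/n}>1-k^{2}$, i.e. when $t>t^{*}:=\tfrac{n}{2}\ln(1-k^{2})$, which is finite because $0<1-k^{2}<1$; thus $I=(t^{*},+\infty)$ and the solution is immortal.

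Finally I would read off the limiting behavior from $k^{\mu(t)}=\mathrm{sgn}(k)\sqrt{1+(k^{2}-1)e^{-2t/n}}$. In case $(ii)$, as $t\to-\infty$ this tends to $+\infty$; since $M=\mathbb{S}^{n}$ and a geodesic sphere of $\mathbb{H}^{n+1}$ whose constant curvature tends to $+\infty$ has radius tending to $0$ (equivalently, $\mu(t)$ tends to the radius $\rho_{0}=\mathrm{arccoth}\,k$ of the initial sphere), $F^{t}(M)$ collapses to a point. In case $(i)$, as $t\to t^{*}$ one has $q(t^{*})=0$, hence $k^{\mu(t^{*})}=0$, while $\cosh\mu(t^{*})=1/\sqrt{1-k^{2}}$ and $\sinh\mu(t^{*})=k/\sqrt{1-k^{2}}$ are finite; thus $F^{\mu(t)}$ converges to the honest totally geodesic hypersurface $F^{\mu(t^{*})}$ of $\mathbb{H}^{n+1}$. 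In both cases the explicit formulas give $\cosh\mu(t)\to+\infty$ as $t\to+\infty$, i.e. $|\mu(t)|\to\infty$; transferring to the Poincar\'e ball model via the conformal isometry $G$ (as in the proof of Lemma \ref{res.23}), the images $G(F^{t}(M))$ are pushed out to infinity and accumulate on $\partial\mathbb{B}^{n+1}$, which is precisely the assertion $F^{t}(M)\to\partial\mathbb{H}^{n+1}$. I expect this last step to be the main obstacle: one must make precise the mode of convergence to the conformal boundary by controlling the sets $G(F^{t}(M))\subset\mathbb{B}^{n+1}$, whereas solving the quadratic, fixing the branch via $\mu(0)=0$, and identifying $I$ are essentially bookkeeping; the totally geodesic limit in case $(i)$ likewise calls for a brief argument that $F^{\mu(t)}\to F^{\mu(t^{*})}$ in a suitable sense (e.g. locally $C^{\infty}$, or in the Hausdorff sense on compact sets) rather than merely pointwise in the curvature.
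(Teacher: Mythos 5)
Your proposal is correct and follows essentially the same route as the paper's proof: reduce via Theorem \ref{res.16} to $\cosh\mu-k\sinh\mu=e^{t/n}$, solve the resulting quadratic with the branch fixed by $\mu(0)=0$, identify the maximal interval through positivity of $H^{t}$ (equivalently $q(t)>0$), and invoke Lemma \ref{res.23} for the convergence to $\partial\mathbb{H}^{n+1}$. The only cosmetic differences are that you solve the quadratic for $\sinh\mu$ rather than $\cosh\mu$ and detect the point-collapse in case $(ii)$ via the blow-up of $k^{\mu(t)}$, whereas the paper observes that the first fundamental form $e^{2t/n}\delta_{ij}$ degenerates as $t\to-\infty$.
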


\begin{theorem}\label{res.17}
	Let $F:\mathbb{S}^m \times \mathbb{H}^m \longrightarrow \mathbb{H}^{n+1} \subset \mathbb{L}^{n+2}$ be a connected  isometric immersion of a cylinder in the hyperbolic space, with 
	inward  
	unit normal vector field $N$ and two distinct principal curvatures, $k_{1}>1$ and $k_{2}$, with the same multiplicity $m$, such that $k_{1}k_{2}=1$. Then, the solution to the IMCF by parallel hypersurfaces, with initial condition $F$, is given by
	\begin{eqnarray}
	{F}^{t}(x)=\cosh(\mu(t))F(x)+\sinh(\mu(t))N(x), \nonumber
	\end{eqnarray}
	where
	\begin{eqnarray}\label{eq.p.104}
	\cosh (2\mu(t))=\frac{-e^{t/m}+a\sqrt{q(t)}} {a^2-1}, 
	\qquad 
	\sinh (2\mu(t))=\frac{-a e^{t/m}+\sqrt{q(t)}} {a^2-1}
	\end{eqnarray}
where
 $$q(t)= e^{2t/m}+ a^2-1, \quad \mbox{ and }\quad a=\frac{k_1+k_2}{2}= \frac{H}{n}$$ 
 $\forall t \in \R$, and $x\in \mathbb{S}^m \times \mathbb{H}^m$, i.e., ${F}^{t}$ is an eternal solution.
 Moreover, for each $t\in\R$,  ${F}^{t}(\mathbb{S}^{m}\times \mathbb{H}^{m})$ is a cylinder in the hyperbolic space with prinicpal curvatures satisfying $k_1^tk_2^t=1$, which converges to an $m$-dimensional submanifold of $\mathbb{H}^{n+1}$ when  
 when $t \rightarrow -\infty$ and it converges to  
  $\partial\mathbb{H}^{n+1}$ when $t \rightarrow +\infty$.
\end{theorem}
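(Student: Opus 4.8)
The plan is to apply Theorem \ref{res.16} directly with $\varepsilon=-1$ and then solve the resulting algebraic equation explicitly, exploiting the symmetry coming from $k_1k_2=1$. First I would substitute the principal curvature data into equation \eqref{eq.p.114}: since $m$ of the curvatures equal $k_1$ and $m$ of them equal $k_2$, with $n=2m$, the defining relation becomes
\[
\bigl(\cosh\mu(t)-k_1\sinh\mu(t)\bigr)^m\bigl(\cosh\mu(t)-k_2\sinh\mu(t)\bigr)^m = e^{t},
\]
so that, taking $m$-th roots (the factors are positive as long as the flow is smooth, which one checks from $g^\mu$ in Lemma \ref{res.4}(ii)),
\[
\bigl(\cosh\mu(t)-k_1\sinh\mu(t)\bigr)\bigl(\cosh\mu(t)-k_2\sinh\mu(t)\bigr) = e^{t/m}.
\]
Expanding the left side using $k_1k_2=1$ and $k_1+k_2=2a=2H/n$, together with $\cosh^2\mu-\sinh^2\mu=1$ and $\cosh^2\mu+\sinh^2\mu=\cosh 2\mu$, $2\cosh\mu\sinh\mu=\sinh 2\mu$, I would obtain the linear relation
\[
\cosh 2\mu(t) - a\,\sinh 2\mu(t) = e^{t/m}.
\]
This is the key reduction: one equation in $(\cosh 2\mu,\sinh 2\mu)$, to be solved together with $\cosh^2 2\mu-\sinh^2 2\mu=1$.

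Next I would solve this $2\times 2$ system. Writing $u=\cosh 2\mu$, $v=\sinh 2\mu$, from $u-av=e^{t/m}$ we get $u=e^{t/m}+av$, and substituting into $u^2-v^2=1$ gives $(a^2-1)v^2+2ae^{t/m}v+(e^{2t/m}-1)=0$. Since $a=H/n>k_1/n\cdot$... more precisely $a=(k_1+k_2)/2>1$ because $k_1>1$, $k_2=1/k_1\in(0,1)$ and the arithmetic mean exceeds the geometric mean $\sqrt{k_1k_2}=1$; hence $a^2-1>0$. Solving the quadratic, choosing the root that yields $\mu(0)=0$ (i.e.\ $v\to 0$, $u\to1$ as $t\to0$, which forces $q(0)=a^2$ and picks the branch with the "$+\sqrt{q(t)}$" sign), and simplifying with $q(t)=e^{2t/m}+a^2-1$, I would arrive exactly at the stated formulas \eqref{eq.p.104} for $\cosh 2\mu(t)$ and $\sinh 2\mu(t)$, and then record $\cosh\mu,\sinh\mu$ via half-angle identities if needed for the statement. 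Because $q(t)=e^{2t/m}+a^2-1>a^2-1>0$ for every $t\in\R$ (here is where $a>1$ is essential, in contrast to the $0<|k|<1$ case of Theorem \ref{res.22}), the solution is defined for all $t\in\R$, so the flow is eternal; I would also verify $\cosh 2\mu(t)\ge 1$ holds on all of $\R$ so that $\mu(t)$ is a genuine smooth real function, and that the factors $\cosh\mu-k_i\sinh\mu$ stay positive so the parallel hypersurfaces remain immersed.

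Finally, for the geometric description of the limits I would use Lemma \ref{res.4}(iv): the principal curvatures of $F^t$ are $k_i^t=(-\sinh\mu+k_i\cosh\mu)/(\cosh\mu-k_i\sinh\mu)$, and a direct computation using $k_1k_2=1$ shows $k_1^t k_2^t=1$ is preserved along the flow, so each $F^t(\mathbb S^m\times\mathbb H^m)$ is again a cylinder of this type. As $t\to -\infty$ one has $e^{t/m}\to 0$, hence $\cosh 2\mu(t)\to a/\sqrt{a^2-1}$ and $\sinh 2\mu(t)\to 1/\sqrt{a^2-1}$, i.e.\ $\mu(t)$ tends to a finite limit $\mu_-$; evaluating the radii of the two factors of the cylinder $\bigl|C_\varepsilon(\mu)-k_i S_\varepsilon(\mu)\bigr|$ from Lemma \ref{res.4}(ii), I expect one factor (the $\mathbb H^{m}$ direction, since $k_2<1$) to degenerate while the other stays bounded, so the flow collapses onto an $m$-dimensional submanifold. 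As $t\to+\infty$, $\mu(t)\to+\infty$, and here I would pass to the Poincaré ball model via the conformal isometry $G$ (as in Lemma \ref{res.23}, referenced in the statement) and argue that $F^t(M)$ escapes every compact set of $\mathbb H^{n+1}$ and accumulates on $\partial\mathbb H^{n+1}=G^{-1}(\partial\mathbb B^{n+1})$. The main obstacle I anticipate is not the algebra — which is forced once the reduction to $\cosh 2\mu - a\sinh 2\mu = e^{t/m}$ is made — but the careful identification of the collapsing submanifold at $t\to-\infty$ and the rigorous statement of convergence to the conformal boundary at $t\to+\infty$; these require tracking the images in the ball model and controlling the two "radii" $\cosh\mu-k_i\sinh\mu$ separately, and this is presumably where Lemma \ref{res.23} and the explicit form of $G$ do the real work.
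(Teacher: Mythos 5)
Your proposal follows essentially the same route as the paper: reduce \eqref{eq.p.114} to $\cosh 2\mu - a\sinh 2\mu = e^{t/m}$ using $k_1k_2=1$ and $n=2m$, solve the resulting quadratic with the branch fixed by $\mu(0)=0$, observe $q(t)>0$ for all $t$ since $a>1$ (your AM--GM justification is correct and slightly more explicit than the paper's), and handle $t\to+\infty$ via Lemma \ref{res.23}. One concrete point is backwards, though you hedged it: as $t\to-\infty$ it is the $\mathbb{S}^m$ factor (the block with principal curvature $k_1>1$), not the $\mathbb{H}^m$ factor, that degenerates. Indeed, using \eqref{eq.p.104} one finds $(\cosh\mu - k_i\sinh\mu)^2 = k_i\sqrt{q(t)} + \tfrac{1-k_i^2}{2}$, and since $\sqrt{q(t)}\to\sqrt{a^2-1}=(k_1-k_2)/2$ as $t\to-\infty$, this limit equals $\tfrac{1-k_1k_2}{2}=0$ for the $k_1$-block and $1-k_2^2>0$ for the $k_2$-block; equivalently, the focal condition $\tanh\mu = 1/k_i$ is attainable only for $k_1>1$. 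Since both factors are $m$-dimensional your stated conclusion (collapse onto an $m$-dimensional submanifold) is unaffected, but the identification of the surviving factor should be corrected.
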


We now consider the isoparametric hypersurfaces of the sphere. M\"unzner \cite{Munzner} showed that the number $g$ of distinct principal curvatures, for an isoparametric hypersurface $F:M^{n} \longrightarrow \mathbb{S}^{n+1}$, is restricted to be $1$, $2$, $3$, $4$ or $6$. Cartan \cite{Cartan} classified these hypersurfaces when $g \leq 3$. If $g=1$, then $F(M)$ is a sphere obtained as the intersection of $\mathbb{S}^{n+1}$ with a hyperplane of $\E^{n+2}$. If $g=2$, then $F(M)$ must be the standard product of spheres $\mathbb{S}^{l}_{r_1} \times \mathbb{S}^{n-l}_{r_2} \subset \mathbb{S}^{n+1}$, where $r_{1}^{2}+r_{2}^{2}=1$. If $g=3$, Cartan proved that there are only four distinct isoparametric hypersurfaces of $\mathbb{S}^{n+1}$ with three distinct principal curvatures. Their dimensions are $n=3m$, where $m=1$, $2$, $4$ or $8$ and all the principal curvatures have the same multiplicity $m$. If $g=4$, M\"unzner \cite{Munzner}      proved that the principal curvatures $k_{1}$, $k_{2}$, $k_{3}$, $k_{4}$ can be ordered so that their corresponding multiplicities satisfy $m_1=m_3$ and $m_2=m_4$. If $g=6$, M\"unzner \cite{Munzner} showed that all the principal curvatures must have the same multiplicity $m$ and Abresch \cite{Abresch} showed that $m=1$ or $m=2$.

\begin{obs}\label{res.13}
{\normalfont {Let $F:M^{n} \longrightarrow \mathbb{S}^{n+1}$ be a connected  oriented isoparametric hypersurface with inward   
unit normal vector field $N$, i.e., $H>0$ and let $k_{1},\ldots,k_{n}$ be the principal curvatures of $F(M)$ associated to $N$. One can determine the principal curvatures of $M^{n} \subset \mathbb{S}^{n+1}$ up to a constant. In fact, for all $g\in \{1,2,3,4,6\}$, let $\tau \in \R$ with $-1<\tau<1$ and $\tau=\cos(g s)$, i.e., $0<s<\frac{\pi}{g}$, we consider 
	\begin{eqnarray}\label{eq.p.19}
	k_{j}=\cot\left(s+\frac{j-1}{g}\pi\right),~j=1,...,g. \nonumber
	\end{eqnarray}
	The hypersurfaces for different values of the constant $\tau$ are parallel in $\mathbb{S}^{n+1}$.
 }}
\end{obs}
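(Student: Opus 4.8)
\medskip

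\noindent The plan is to derive the statement from two facts already at our disposal: M\"unzner's structure theorem for isoparametric hypersurfaces of the sphere (quoted above) and the transformation rule for principal curvatures under a parallel displacement, Lemma \ref{res.4}(iv). The assertion has two halves, which I would treat separately: (a) the $g$ distinct principal curvatures of $M^{n}\subset\mathbb{S}^{n+1}$ are necessarily of the displayed form $k_{j}=\cot\!\big(s+\tfrac{j-1}{g}\pi\big)$ for some $\tau=\cos(gs)\in(-1,1)$; and (b) as $\tau$ varies, the resulting hypersurfaces constitute a single family of mutually parallel hypersurfaces of $\mathbb{S}^{n+1}$.

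For (a), I would write each distinct principal curvature uniquely as $k_{i}=\cot\theta_{i}$ with $\theta_{i}\in(0,\pi)$, ordered so that $0<\theta_{1}<\cdots<\theta_{g}<\pi$ (every real number is the cotangent of a unique angle in $(0,\pi)$, so no hypothesis is needed here). M\"unzner \cite{Munzner} proved that then $\theta_{i}=\theta_{1}+\tfrac{i-1}{g}\pi$ for $i=1,\dots,g$. Setting $s:=\theta_{1}$, the constraint $0<\theta_{g}=s+\tfrac{g-1}{g}\pi<\pi$ forces $s\in(0,\pi/g)$, and $s\mapsto\tau:=\cos(gs)$ is a bijection of $(0,\pi/g)$ onto $(-1,1)$; this is exactly the claimed parametrization, so the principal curvatures are determined by the single constant $\tau$. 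The standing assumption $H>0$ enters only to fix the orientation: replacing $N$ by $-N$ sends $k_{i}=\cot\theta_{i}$ to $\cot(\pi-\theta_{i})$, which reverses the ordering and changes $s$ into $\pi/g-s$, i.e.\ $\tau$ into $-\tau$; once the sign of $N$ is chosen so that $H>0$, the hypersurface $M$ corresponds to a definite value of $\tau$.

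For (b), I would apply Lemma \ref{res.4}(iv) with $\varepsilon=1$, which gives the $i$-th principal curvature of the parallel hypersurface $F^{\mu}(M)$ as
\[
k_{i}^{\mu}=\frac{\sin\mu+k_{i}\cos\mu}{\cos\mu-k_{i}\sin\mu}.
\]
Substituting $k_{i}=\cot\theta_{i}$ and simplifying with the addition formulas for sine and cosine collapses the right-hand side to $\cot(\theta_{i}-\mu)$. Hence $F^{\mu}(M)$ has distinct principal curvatures $\cot\!\big((s-\mu)+\tfrac{j-1}{g}\pi\big)$, $j=1,\dots,g$; reducing $s-\mu$ modulo $\pi/g$ (legitimate since $\cot$ has period $\pi$, at the cost of only a cyclic relabeling of the $k_{j}$) exhibits $F^{\mu}(M)$ as the member of the displayed family with parameter $s'\equiv s-\mu\ (\mathrm{mod}\ \pi/g)$, and every $s'\in(0,\pi/g)$ arises this way for a suitable $\mu$. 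Therefore the hypersurfaces attached to different values of $\tau$ are precisely the hypersurface leaves of one parallel family in $\mathbb{S}^{n+1}$ (the remaining values of $\mu$, those with $s-\mu\equiv0$, giving the two focal submanifolds instead), and they are pairwise non-coincident because their principal-curvature configurations differ.

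The one genuinely deep ingredient is the equal spacing $\theta_{i}=\theta_{1}+\tfrac{i-1}{g}\pi$, i.e.\ M\"unzner's theorem, which I simply quote; everything else is the elementary cotangent identity produced by Lemma \ref{res.4}(iv) together with bookkeeping. The point to state carefully is the cyclic relabeling used in (b) when $s-\mu$ leaves $(0,\pi/g)$: crossing a focal submanifold moves the label of the largest principal curvature to that of the smallest, so the indexing $j=1,\dots,g$ in the statement is canonical only up to a cyclic shift. This is harmless for the identity of the hypersurface and is compatible with M\"unzner's periodicity $m_{i}=m_{i+2}$ of the multiplicities, so no ambiguity remains.
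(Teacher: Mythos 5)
Your argument is correct and is precisely the standard justification that the paper leaves implicit: Remark \ref{res.13} is stated without proof, resting on M\"unzner's equal-spacing theorem $\theta_i=\theta_1+\tfrac{i-1}{g}\pi$ (quoted in the paragraph preceding the remark) together with the cotangent identity $k_i^{\mu}=\cot(\theta_i-\mu)$ that follows from Lemma \ref{res.4}(iv) with $\varepsilon=1$. Your additional remarks on the role of $H>0$ in fixing the sign of $\tau$ and on the cyclic relabeling when crossing a focal submanifold are accurate refinements of what the paper takes for granted.
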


We now describe the solutions to the IMCF by parallel hypersurfaces considering as initial conditions the isoparametric hypersurfaces of the sphere $\mathbb{S}^{n+1}$ with $g$ distinct principal curvatures, with the same multiplicity.   In Theorem \ref{resgeral},  this is an additional assumption only for  $g=2$  or $g=4$, since in these cases the principal curvatures may have different multiplicities. Whenever $g\in\{1,3,6\}$ the principal curvatures have the same multiplicities.    

\begin{theorem}\label{resgeral}
Let $F: M^{n} \longrightarrow \mathbb{S}^{n+1} \subset \E^{n+2}$ be a connected  isoparametric hypersurface in $\mathbb{S}^{n+1}$, with inward 
unit normal vector field $N$, i.e., $H>0$ and $g$ distinct principal curvatures $k_{i}$, $i=1,\dots,g$, where $g\in\{1,2,3,4,6\}$,  all with multiplicity $m$, i.e.,  $n=gm$.  
Then, the solution to the IMCF by parallel hypersurfaces with initial condition $F$ is given by
\begin{eqnarray}
{F}^{t}(x)=\cos(\mu(t))F(x)+\sin(\mu(t))N(x), \nonumber
\end{eqnarray}
$\forall x \in M$, where
\begin{eqnarray}
\cos(g\mu(t))=\frac{e^{t/m}+a\sqrt{q(t)}}{a^{2}+1},\qquad  \sin(g\mu(t))=\frac{-ae^{t/m}+\sqrt{q(t)}}{a^{2}+1}, \nonumber
\end{eqnarray}
\begin{eqnarray}
q(t)=a^{2}+1-e^{2t/m} \quad  \text{ and } \quad  a=  \frac{1}{g}\sum_{i=1}^g k_{i}=\frac{H}{n}, \nonumber
\end{eqnarray}
with $t \in (-\infty,t^{*})$ and $t^{*}=\frac{m}{2}\ln(a^{2}+1)$, i.e., $ {F}^{t}$ is an ancient solution.  Moreover, when $t\rightarrow -\infty$ 
 $ {F}^{t}(M)$ converges to an  $m(g-1)$-dimensional submanifold of $\mathbb{S}^{n+1}$   and when $t \longrightarrow t^{*}$ it converges to a minimal hypersurface $F^{t^*}(M)$  
of $\mathbb{S}^{n+1}$, whose  squared length  of its second fundamental form is   
$|A|^2= m g(g-1)=n(g-1)$ and its scalar  curvature is given by 
$R=m(m-1)g^2=n(n-g)$. 
In particular,  $F^{t^*}(M)$ is a totally geodesic submanifold of the sphere when $g=1$, it is a Clifford minimal hypersurface of the sphere, when $g=2$, and it is a Cartan type minimal submanifold when $g\in\{3,4,6\}$.       
\end{theorem}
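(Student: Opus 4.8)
The plan is to invoke Theorem~\ref{res.16}: since $F(M)$ is isoparametric, the solution to the IMCF by parallel hypersurfaces exists and has the stated form $F^t(x) = \cos(\mu(t))F(x) + \sin(\mu(t))N(x)$, where $\mu$ is the smooth function with $\mu(0)=0$ satisfying the algebraic equation $\prod_{i=1}^n(\cos\mu(t) - k_i\sin\mu(t)) = e^t$. So the entire problem reduces to analyzing this algebraic equation when the $g$ distinct principal curvatures each occur with multiplicity $m$, i.e. $n = gm$. First I would use the description of the principal curvatures of an isoparametric hypersurface in the sphere from Remark~\ref{res.13}: writing $k_j = \cot(s + \tfrac{j-1}{g}\pi)$, the factors $\cos\mu - k_j\sin\mu$ become (up to the common factor $1/\sin(s+\tfrac{j-1}{g}\pi)$) essentially $\sin(s + \tfrac{j-1}{g}\pi - \mu)$ — more precisely $\cos\mu - \cot\theta\sin\mu = \sin(\theta-\mu)/\sin\theta$. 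Then the product over $j$ of $\sin(\theta_j - \mu)$ with $\theta_j$ in arithmetic progression of step $\pi/g$ telescopes via the classical identity $\prod_{j=0}^{g-1}\sin(\alpha + \tfrac{j\pi}{g}) = \frac{\sin(g\alpha)}{2^{g-1}}$; applied with $\alpha = s-\mu$ and with $\alpha = s$ this collapses the whole equation $\prod(\cos\mu - k_j\sin\mu)^m = e^t$ into $\left(\frac{\sin(g(s-\mu))}{\sin(gs)}\right)^m = e^t$, i.e. $\sin(g(s-\mu)) = \sin(gs)\,e^{t/m}$. Using $\cos(gs) = \tau$ and relating $\tau$ and $\sin(gs)$ to $a = H/n = \frac1g\sum k_i$ — a computation that should give $a = -\cot(gs)$ (hence $\sin(gs) = 1/\sqrt{a^2+1}$, $\cos(gs) = -a/\sqrt{a^2+1}$, after choosing signs so that $H>0$) — one expands $\sin(g(s-\mu)) = \sin(gs)\cos(g\mu) - \cos(gs)\sin(g\mu)$ and, together with the Pythagorean relation $\cos^2(g\mu) + \sin^2(g\mu) = 1$, solves the resulting $2\times 2$ system for $\cos(g\mu(t))$ and $\sin(g\mu(t))$. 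This yields the stated closed forms with $q(t) = a^2 + 1 - e^{2t/m}$; the sign of the square root and the branch are fixed by the initial condition $\mu(0)=0$ (so $\cos(g\mu(0))=1$, which forces the $+\sqrt{q}$ branch) and by requiring continuity/smoothness.

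Next I would determine the maximal interval of definition. Since $\cos(g\mu(t))$ and $\sin(g\mu(t))$ are real and bounded, we need $q(t) \ge 0$, i.e. $e^{2t/m} \le a^2+1$, giving the upper endpoint $t^* = \tfrac{m}{2}\ln(a^2+1)$; as $t\to -\infty$ there is no lower obstruction, and one checks that $\cos\mu - k_i\sin\mu > 0$ holds throughout so the parallel hypersurfaces remain genuine immersions and $H^t > 0$ (this uses the monotonicity of $g\mu(t)$, which follows from differentiating the algebraic equation). Hence $I = (-\infty, t^*)$ and the solution is ancient. For the limiting behavior: as $t\to t^*$, $q\to 0$, so $g\mu(t) \to$ a value where $\sin(g\mu) = -a/\sqrt{a^2+1}$ and $\cos(g\mu) = 1/\sqrt{a^2+1}$ — this is precisely the parameter value $s^*$ at which the principal curvatures of $F^{t^*}(M)$ become $k_j^* = \cot(s^* + \tfrac{j-1}{g}\pi)$ with $\sum_j k_j^* = 0$, i.e. $F^{t^*}(M)$ is minimal; I would then read off from Lemma~\ref{res.4}(iv) and the structure of isoparametric hypersurfaces that this is the unique minimal isoparametric hypersurface in the parallel family, whose $|A|^2$ and scalar curvature I compute directly. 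For $|A|^2 = \sum_i (k_i^*)^2 = m\sum_{j=1}^g \cot^2(s^* + \tfrac{j-1}{g}\pi)$, I would use the trigonometric identity $\sum_{j=1}^{g}\cot^2\!\big(s + \tfrac{j-1}{g}\pi\big) = g^2\csc^2(gs) - g$ (valid by differentiating the product identity, or by a partial-fractions argument) evaluated at the minimal parameter where $\csc^2(gs^*) = a^2+1$ with $a=0$ at that hypersurface — wait, more carefully: the minimality condition makes $\cot(gs^*) = 0$ hence $\csc^2(gs^*)=1$, giving $|A|^2 = m(g^2 - g) = mg(g-1) = n(g-1)$. The scalar curvature then follows from the Gauss equation for hypersurfaces in $\mathbb{S}^{n+1}$: $R = n(n-1) + H^2 - |A|^2 = n(n-1) + 0 - n(g-1) = n(n-g) = m(m-1)g^2$. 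The classification of $F^{t^*}(M)$ (totally geodesic for $g=1$, Clifford torus $\mathbb{S}^m_{1/\sqrt 2}\times\mathbb{S}^m_{1/\sqrt 2}$ for $g=2$, Cartan-type for $g\in\{3,4,6\}$) follows from the fact that it is an isoparametric hypersurface with $g$ distinct principal curvatures of equal multiplicity $m$ that is minimal, combined with M\"unzner's and Cartan's classification results already cited.

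Finally, the claim that $F^t(M)$ converges to an $m(g-1)$-dimensional submanifold as $t\to-\infty$ needs the analysis of the other extreme. As $t\to -\infty$, $e^{t/m}\to 0$, so $\cos(g\mu(t)) \to a/\sqrt{a^2+1}$ and $\sin(g\mu(t)) \to 1/\sqrt{a^2+1}$, i.e. $g\mu(t)$ approaches the focal parameter value $s_0$ with $\cot(gs_0)= a$; at a focal value one of the factors $\cos\mu - k_i\sin\mu$ vanishes, so by Lemma~\ref{res.4}(ii) the metric $g^\mu$ degenerates in the corresponding $m$-dimensional eigendistribution, and the focal set is a submanifold of dimension $n - m = m(g-1)$ — this is the standard focal submanifold of the isoparametric family. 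I would make this precise by noting that for an isoparametric hypersurface with $g$ distinct principal curvatures of multiplicity $m$, the two focal submanifolds have dimensions $n-m$ and $n - m$ (both equal to $m(g-1)$ when all multiplicities coincide), and the flow, being a reparametrization of the parallel family, sweeps from one focal submanifold toward the minimal leaf.

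The main obstacle I expect is the bookkeeping in the trigonometric reduction: getting the common factor $\prod_j \sin^{-m}(s+\tfrac{j-1}{g}\pi)$ to combine cleanly via the product identity, tracking the correct signs so that $H > 0$ fixes the right branch of $gs$ and of the square root $\sqrt{q(t)}$ consistently for all $g\in\{1,2,3,4,6\}$, and verifying $\cos\mu(t) - k_i\sin\mu(t) > 0$ on the whole interval so the formula genuinely describes an immersed family with positive mean curvature. The computations of $|A|^2$ and $R$, and the identification of the collapsing submanifolds, are then essentially corollaries of the closed-form solution plus the cited classification theorems.
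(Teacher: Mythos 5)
Your proposal is correct and follows the same overall architecture as the paper's proof --- reduce via Theorem \ref{res.16} to the algebraic equation $\prod_{j}(\cos\mu-k_j\sin\mu)^m=e^{t}$, collapse it to the linear relation $\cos(g\mu)-a\sin(g\mu)=e^{t/m}$, solve the resulting quadratic with the branch fixed by $\mu(0)=0$, and then analyze the two endpoints --- but the two technical pivots are handled by genuinely different devices. For the collapse of the product, the paper substitutes the explicit rational expressions of $k_2,\dots,k_g$ in terms of $k_1$ (Remark \ref{relaki}) and verifies the identity by a separate computation for each $g\in\{1,2,3,4,6\}$, whereas you use the cotangent parametrization $k_j=\cot\bigl(s+\tfrac{j-1}{g}\pi\bigr)$ of Remark \ref{res.13} together with the product formula $\prod_{j=0}^{g-1}\sin\bigl(\alpha+\tfrac{j\pi}{g}\bigr)=2^{1-g}\sin(g\alpha)$, which gives a single uniform derivation. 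Similarly for $|A|^2$: the paper uses minimality in the form $\sum_i(k_i^{t^*})^2=-2\sum_{i<j}k_i^{t^*}k_j^{t^*}$ and computes the second symmetric function case by case, while you evaluate $\sum_j\cot^2\bigl(s^*+\tfrac{j-1}{g}\pi\bigr)=g^2\csc^2(gs^*)-g$ at $\cot(gs^*)=0$; both give $mg(g-1)$. Your versions are cleaner and $g$-independent, the paper's more elementary but repetitive; the remaining steps (Gauss equation for $R$, the identification $H^t=0\iff q(t)=0\iff t=t^*$, the degeneration of the first fundamental form in the $k_1$-eigendistribution as $t\to-\infty$ giving the $m(g-1)$-dimensional focal submanifold, and the non-degeneracy checks $\cos\mu>0$ and $\cos\mu-k_j\sin\mu\neq 0$ on $(-\infty,t^*]$) match the paper's. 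One sign to repair in a full write-up: since $\sum_j\cot\bigl(s+\tfrac{j-1}{g}\pi\bigr)=g\cot(gs)$, the hypothesis $H>0$ forces $a=+\cot(gs)$, hence $\cos(gs)=a/\sqrt{a^2+1}$, not $a=-\cot(gs)$ as you wrote; with your sign the expansion of $\sin(g(s-\mu))$ would yield $\cos(g\mu)+a\sin(g\mu)=e^{t/m}$ and flip the sign of $\sin(g\mu(t))$. Likewise, as $t\to-\infty$ it is $\mu(t)\to s$ (equivalently $\tan\mu\to 1/k_1$), not $g\mu$ tending to the focal parameter. Neither slip affects the structure of the argument.
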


\section{Proof of the main results}\label{Proofs}

\begin{proof}[{\bf{Proof of Theorem \ref{res.16}}}]
Consider a one parameter family of parallel hypersurfaces ${F}^t:M^{n} \longrightarrow \qe$, $t\in I\subset \R$ where $0 \in I$, given by $(\ref{eq.p12})$, i.e., 
\begin{eqnarray}\label{eq.p.106}
 {F}^{t}(x)= C_{\varepsilon}(\mu(t))F(x)+S_{\varepsilon}(\mu(t))N(x), 
 \quad \mbox{ and }\quad F^0(x)=F(x),
\end{eqnarray}
$\forall (x,t) \in M \times I$, where $C_{\varepsilon}$, $S_\varepsilon$ are the functions defined in \eqref{eq.p3} and $\mu: I \longrightarrow \R$ is a smooth real function such that ${F}^t$ is a solution to the IMCF with initial data $F(M)$. Then, 
\begin{eqnarray}\label{eq.p.108}
\frac{\partial  {F}^t}{\partial t}(x,t)=-\frac{1}{ {H}^t(x)}{N}^t(x),~\forall (x,t) \in M \times I,
\end{eqnarray}
where  ${H}^t(x)>0$, $\forall t \in I$, $\forall x \in M$.  It follows from $(\ref{eq.p.106})$ that $\mu(0)=0$. Now, substituting $(\ref{eq.p.106})$ into $(\ref{eq.p.108})$, we get, 
\begin{eqnarray}
\mu'(t)\left[-\varepsilon S_{\varepsilon}(\mu(t))F(x)+C_{\varepsilon}(\mu(t))N(x) \right]  =-\frac{1}{{H}^t(x)}{N}^t(x). \nonumber
\end{eqnarray}
Therefore, from Lemma \ref{res.4}, we have that $(\ref{eq.p.108})$ occurs if and only if
$ \mu'(t)=-\frac{1}{ {H}^t(x)}$, 
$\forall (x,t) \in M \times I$. Thus, the mean curvature ${H}^{t}$ of ${F}^{t}(M)$ does not depend on $x \in M$, $\forall t \in I$. It follows from Lemma $\ref{res.5}$ that $F(M)$ is an isoparametric hypersurface.

Conversely, let $F:M^{n} \longrightarrow \qe$ be an oriented isoparametric hypersurface with  unit normal vector field $N$, such that the mean curvature is positive. Let $\mu: I \longrightarrow \R$, $0 \in I \subset \R$, be the smooth real function that is the unique solution of the problem:
\begin{eqnarray}\label{eq.p.110}
\mu'(t)=-\frac{1}{\sum_{i=1}^{n}\frac{\varepsilon S_{\varepsilon}(\mu(t))+k_{i}C_{\varepsilon}(\mu(t))}{C_{\varepsilon}(\mu(t))-k_{i}S_{\varepsilon}(\mu(t))}} ~ \text{  and  } ~ \mu(0)=0, 
\end{eqnarray}
where $k_{i}$, $1 \leq i \leq n$, are the principal curvatures of $F(M)$ with respect to $N$ and $C_{\varepsilon}$, $S_{\varepsilon}$ are given by $(\ref{eq.p3})$. Consider the family of parallel hypersurfaces ${F}^t:M^{n}  \longrightarrow \qe$ given by \eqref{eq.p.106}.
We need to show that ${F}^{t}$ is a solution to the IMCF by parallel hypersurfaces with initial data $F$. Indeed, ${F}^0(x)=F(x)$ and we get from Lemma $\ref{res.4}$ that
\begin{eqnarray}\label{eq.p.111}
\frac{\partial  {F}^t}{\partial t}(x,t)=\mu'(t)\left[-\varepsilon S_{\varepsilon}(\mu(t))F(x)+C_{\varepsilon}(\mu(t))N(x)\right]=\mu'(t) {N}^t(x),
\end{eqnarray}
$\forall (x,t) \in M \times I$. Moreover, it follows from Lemma $\ref{res.4}$ and $(\ref{eq.p.110})$ that, $\forall (x,t) \in M \times I$,
\begin{eqnarray}\label{eq.p.112}
 {H}^t(x)=\sum_{i=1}^{n}{k}_{i}^t(x)=\sum_{i=1}^{n}\frac{\varepsilon S_{\varepsilon}(\mu(t))+k_{i}C_{\varepsilon}(\mu(t))}{C_{\varepsilon}(\mu(t))-k_{i}S_{\varepsilon}(\mu(t))}=-\frac{1}{\mu'(t)}.
\end{eqnarray}
Then, from $(\ref{eq.p.111})$ and $(\ref{eq.p.112})$, we have that, $\forall (x,t) \in M \times I$,
$\frac{\partial {F}^t}{\partial t}(x,t)=- {N}^t(x)/{H}^t(x)$, 
which concludes the first part of the theorem.

Let ${F}^t$ be given as in \eqref{eq.p.106}, where $\mu : I \longrightarrow \R$ is a smooth real function satisfying 
\eqref{eq.p.110}, with $\mu(0)=0$. This ordinary differential equation implies that  $(\ref{eq.p.114})$ holds.

\end{proof}

\begin{proof}[{\bf{Proof of Theorem \ref{res.12novo}}}]
(i) If $m=n$, then it follows from Theorem $\ref{res.16}$, that 
\begin{eqnarray}\label{eq.p.283}
{F}^{t}(x)=F(x)+\mu(t)N(x),
\end{eqnarray}
$\forall (x,t) \in  \mathbb{S}^{n}_{r_{0}} \times I$, where, from $(\ref{eq.p.114})$, $\mu(t)$ satisfies
$\prod_{i=1}^{n}(1-k_{i}\mu(t))=e^{t}$.  
Since all the principal curvatures are equal to $k=1/r_{0}$, we have that
$
1-{\mu(t)}/{r_{0}}=e^{t/n}$, 
$\forall t \in I$ and $\forall x \in \mathbb{S}^{n}_{r_{0}}$. 
Thus, since $N(x)=-\frac{1}{r_{0}}F(x)$, from $(\ref{eq.p.283})$ we have
$
{F}^{t}(x)=e^{t/n}F(x)$,
$\forall t \in \R$ and $\forall x \in \mathbb{S}^{n}_{r_{0}}$, i.e., the solution to the IMCF is an eternal solution and  for each $t\in \R$, ${F}^{t}(\mathbb{S}^{n}_{r_{0}})$ is a sphere centered at the origin with radius $r(t)=r_{0}e^{t/n}$ that converges to the origin  when 
$t \rightarrow -\infty$. 

(ii) If $m\neq n$, from Theorem  $\ref{res.16}$, we have that the family of parallel hypersurfaces is given by 
	\begin{eqnarray}\label{eq.p.287}
	{F}^{t}(x,y)=F(x,y)+\mu(t)N(x,y),
	\end{eqnarray}
	$\forall (x,y,t) \in  \mathbb {S}^{m}_{R_0} \times \E^{n-m} \times I$, where $\mu(t)$  satisfies $(\ref{eq.p.114})$, i.e., 
$	\prod_{i=1}^{n}(1-k_{i}\mu(t))=e^{t}$.  
	Since  $m$ principal curvatures are equal to $k \not=0$ and $n-m$ are null principal curvatures, we have 
$	1-k\mu(t)=e^{t/m}$, $\forall t \in I$, 
and hence 
$	\mu(t)=(1-e^{t/m})/k$, 	$\forall t \in \R$. 
Without loss of generality, we consider $F(x,y)=(f(x),y)$,  $\forall (x,y) \in \mathbb {S}^{m}_{r_0} \times \E^{n-m}$, where $f:\mathbb{S}^{m}_{r_{0}} \longrightarrow \E^{m+1}$ is an isometric immersion of the sphere centered at the origin with radius $r_{0}>0$ and dimension $m\neq n$ in  $\E^{m+1}$. Moreover, $N(x,y)=\left(-f(x)/{r_0},0\right)$ is the inward unit normal vector field of $F(\mathbb {S}^{m}_{r_0} \times \E^{n-m})$, $\forall (x,y) \in \mathbb {S}^{m}_{r_0} \times \E^{n-m}$.
Thus, since $k=1/r_{0}$, we conclude from \eqref{eq.p.287} that
\begin{eqnarray}
{F}^{t}(x,y)=(e^{t/m}f(x),y), \nonumber
\end{eqnarray}
$\forall (x,y,t) \in  \mathbb {S}^{m}_{r_0} \times \E^{n-m} \times \R$. Therefore, the solution to the IMCF is eternal. Moreover, ${F}^{t}(\mathbb {S}^{m} \times \E^{n-m})$, $\forall t \in \R$  is a cylinder whose  base  has radius $r(t)=r_{0}e^{t/m}$ that converges to 
 an $(n-m)$-dimensional euclidean subspace  of $\E^{n+1}$.

\end{proof}

In order to prove that the IMCF in the hyperbolic space tends to the conformal boundary 
$\partial\mathbb{H}^{n+1}$ when  $t\rightarrow \infty$, we will need 
 the following lemma.

\begin{lemma}\label{res.23}
Let $F:M^{n} \longrightarrow \mathbb{H}^{n+1}$ be an oriented isoparametric hypersurface with  unit normal vector field $N$ such that the mean curvature $H$ of $F(M)$ satisfies $H(x)>0$, $\forall x \in M$. Let ${F}^t:M^{n}  \longrightarrow \mathbb{H}^{n+1} \subset \mathbb{L}^{n+2}$ be the solution to the IMCF by parallel hypersurfaces with initial condition $F$ defined on the maximal interval $I$. If $(0,+\infty) \subset I$ and 
$$
\lim\limits_{t \longrightarrow +\infty}\cosh(\mu(t))=+\infty,
$$ 
then ${F}^{t}(M)$ converges to the boundary $\partial\mathbb{H}^{n+1}$ when $t \longrightarrow +\infty$.
\end{lemma}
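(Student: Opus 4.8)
The strategy is to transfer the statement to the Poincaré ball via an explicit conformal isometry and reduce it to the growth of a single coordinate. First I would realize $\mathbb{H}^{n+1}$ as the hyperboloid $\{y\in\mathbb{L}^{n+2}:\langle y,y\rangle=-1,\ y_{1}>0\}$ and take for $G$ the stereographic projection from $(-1,0,\dots,0)$, that is $G(y)=\frac{1}{1+y_{1}}(y_{2},\dots,y_{n+2})\in\mathbb{B}^{n+1}$, which is the standard conformal isometry onto the Poincaré ball. Using $\langle y,y\rangle=-1$ one gets $|G(y)|^{2}=\frac{y_{1}-1}{y_{1}+1}$, hence
\[
1-|G(y)|^{2}=\frac{2}{1+y_{1}},
\]
so $\mathrm{dist}(G(y),\partial\mathbb{B}^{n+1})=1-|G(y)|$ is comparable to $1/(1+y_{1})$ and tends to $0$ exactly when $y_{1}\to+\infty$. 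Therefore the lemma follows once I prove that the first coordinate $[F^{t}(x)]_{1}$ of $F^{t}(x)=\cosh(\mu(t))\,F(x)+\sinh(\mu(t))\,N(x)$ tends to $+\infty$, uniformly in $x\in M$.

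Next I would record the behaviour of $\mu$: since $\mu$ solves $\mu'(t)=-1/H^{t}(x)<0$ with $\mu(0)=0$ (Theorem~\ref{res.16}), $\mu$ is strictly decreasing on $(0,+\infty)\subset I$, and this together with $\cosh(\mu(t))\to+\infty$ forces $\mu(t)\to-\infty$. The key algebraic input is a null–vector identity. Because $F(x)\in\mathbb{H}^{n+1}$ and $N(x)$ is a unit vector in $T_{F(x)}\mathbb{H}^{n+1}=F(x)^{\perp}$, we have $\langle F,F\rangle=-1$, $\langle N,N\rangle=1$, $\langle F,N\rangle=0$, so $\langle F\pm N,F\pm N\rangle=0$: the vectors $\zeta^{\pm}(x):=F(x)\pm N(x)$ are null. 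Moreover $\langle F,N\rangle=0$ and the Cauchy–Schwarz inequality yield $N_{1}^{2}\le F_{1}^{2}-1<F_{1}^{2}$, and since $F_{1}\ge 1$ this gives $|N_{1}|<F_{1}$, i.e. $\zeta^{\pm}_{1}(x)=F_{1}(x)\pm N_{1}(x)>0$. Writing $\cosh(\mu)=\tfrac12(e^{\mu}+e^{-\mu})$ and $\sinh(\mu)=\tfrac12(e^{\mu}-e^{-\mu})$ we get
\[
F^{t}(x)=\tfrac12\,e^{\mu(t)}\,\zeta^{+}(x)+\tfrac12\,e^{-\mu(t)}\,\zeta^{-}(x),
\]
so $[F^{t}(x)]_{1}=\tfrac12 e^{\mu(t)}\zeta^{+}_{1}(x)+\tfrac12 e^{-\mu(t)}\zeta^{-}_{1}(x)\ge\tfrac12 e^{-\mu(t)}\zeta^{-}_{1}(x)$, both summands being positive. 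As $e^{-\mu(t)}\to+\infty$ this already proves $[F^{t}(x)]_{1}\to+\infty$ for each fixed $x$, hence pointwise convergence of $F^{t}$ to $\partial\mathbb{H}^{n+1}$ by the first paragraph.

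To obtain convergence of the whole hypersurface $F^{t}(M)$ — i.e. $\sup_{x\in M}\mathrm{dist}(G(F^{t}(x)),\partial\mathbb{B}^{n+1})\to 0$ — I need the uniform bound $c_{0}:=\inf_{x\in M}\zeta^{-}_{1}(x)>0$, for then $[F^{t}(x)]_{1}\ge\tfrac12 c_{0}\,e^{-\mu(t)}\to+\infty$ uniformly. If $M$ is compact this holds by continuity, and \textbf{verifying it in the noncompact cases is the only step that requires genuine work}: here $M=\mathbb{E}^{n}$ (horosphere), $M=\mathbb{H}^{n}$ (equidistant hypersurface), or $M=\mathbb{S}^{m}\times\mathbb{H}^{m}$ (cylinder). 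I would dispatch these using the explicit isoparametric models; for instance, writing an equidistant $\mathbb{H}^{n}$ as $\{y_{n+2}=c\}\cap\mathbb{H}^{n+1}$ one computes $N_{1}(x)=\pm c\,y_{1}/\sqrt{1+c^{2}}$, whence $\zeta^{-}_{1}(x)=y_{1}\bigl(1\mp c/\sqrt{1+c^{2}}\bigr)\ge\sqrt{1+c^{2}}\bigl(1\mp c/\sqrt{1+c^{2}}\bigr)>0$ because $y_{1}\ge\sqrt{1+c^{2}}$ on that hypersurface; the horosphere and product cases are treated the same way with their respective normal fields. Combining the three steps, $[F^{t}(x)]_{1}\to+\infty$ uniformly in $x$, and by the first paragraph $F^{t}(M)$ converges to $\partial\mathbb{H}^{n+1}$ as $t\to+\infty$, which is the assertion of the lemma.
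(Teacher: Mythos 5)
Your proposal is correct and follows essentially the same route as the paper: both pass to the Poincar\'e ball via the conformal isometry $G$, use $|G(y)|^{2}=(y_{1}-1)/(y_{1}+1)$, and reduce the lemma to showing that the first coordinate of $F^{t}$ tends to $+\infty$. Your null-vector decomposition $F^{t}=\tfrac12 e^{\mu}\zeta^{+}+\tfrac12 e^{-\mu}\zeta^{-}$ together with the bound $|N_{1}|<F_{1}$ moreover justifies two points the paper's limit computation leaves implicit, namely that $F_{1}+N_{1}\tanh(\mu(t))$ stays positive and that the convergence is uniform in $x$.
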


\begin{proof}[{\bf{Proof of Lemma \ref{res.23}}}]
For $x \in M^{n}$, $F(x)$ and  $N(x) \in \mathbb{L}^{n+2}$. We consider the coordinate functions of $F$ and $N$ in $\mathbb{L}^{n+2}$ 
\begin{eqnarray}
F(x)=(F_{1}(x),\dots,F_{n+2}(x)) ~~ \text{  and  } ~~ N(x)=(N_{1}(x),...,N_{n+2}(x)), \nonumber
\end{eqnarray}
where $F_{j}, N_{j}: M^{n} \longrightarrow \R$, $\forall j$ with $1 \leq j \leq n+2$, and $F_{1}(x) \geq 1$. It follows from Theorem  $\ref{res.16}$, that the  hypersurfaces parallel to $F$ are given by 
$
{F}^t(x)=\cosh(\mu(t))F(x)+\sinh(\mu(t))N(x)
$, 
whose  coordinate functions are  
\begin{eqnarray}\label{coord}
{F}^t_{j}(x)=\cosh(\mu(t)) F_{j}(x)+\sinh(\mu(t)) N_{j}(x), \nonumber
\end{eqnarray}
$\forall j$ with $1 \leq j \leq n+2$ and $\forall t \in I$, with ${F}^t_{1}(x) \geq 1$. Moreover, $\forall t \in I$,   
\begin{eqnarray}\label{eq.p.231}
\sum_{j=2}^{n+2} ({F}^t_{j})^{2}=({F}^t_{1})^{2}-1.
\end{eqnarray}

In order to prove that ${F}^t(M)$ converges to the boundary $\partial 
\mathbb{H}^{n+1}$, when $t \longrightarrow +\infty$,  we will use the isometry 
$G: \mathbb{H}^{n+1}\subset \mathbb{L}^{n+2} \longrightarrow  \mathbb{B}^{n+1}\subset \E^{n+1}$, between the hyperbolic space and the Poincar\'e ball model $\mathbb{B}$ defined by
\begin{eqnarray} 
G(x_{1},\ldots,x_{n+2})=\frac{1}{x_{1}+1}(x_{2},\ldots,x_{n},-x_{n+2}),
\end{eqnarray}
where  $\mathbb{B}$ is the ball
 ${B}=\{(x_{1},\dots,x_{n+1}) \in \E^{n+1}\, ; \, \sum_{i=1}^{n+1} x_{i}^{2}<1\}$ with the metric,  for $x\in B$ and $u,v\in T_x {B}$,  
 is given by $
\langle u,v \rangle_{x,B}=
\frac{4}{[1-\sum_{i=1}^{n+1} x_{i}^{2}]^{2}} 
\sum_{i=1}^{n+1} u_iv_i
$.   
We will show  that 
$G({F}^t(x))$ converges to the boundary $\partial \mathbb{B}^{n+1}$, by showing that $|G({F}^t(x))|^2$ converges to 1.

 Restricting $G$ to ${F}^t(M )$, we have 
\begin{eqnarray}\label{functionG}
G({F}^t(x))=\frac{
1}{{F}^t_{1}(x)+1} ({F}^t_{2}(x),...,{F}^t_{n+1}(x),-{F}^t_{n+2}(x)). \nonumber
\end{eqnarray}
Considering the metric of the Poincar\'e ball and in view of \eqref{eq.p.231} we have 
\[
 |G({F}^t(x))|_B^2 = \sum_{j=2}^{n+2}\frac{({F}^t_{j}(x))^2}{( {F}^t_{1}(x))^2+1)^{2}}=\frac{{F}^t_{1}(x)-1}{{F}^t_{1}(x)+1}
= \frac{F_{1}(x)+N_{1}(x)\tanh(\mu(t))-\frac{1}{\cosh(\mu(t))}}{F_{1}(x)+N_{1}(x)\tanh(\mu(t))+\frac{1}{\cosh(\mu(t))}}.
\]
Taking the limit when $t \rightarrow +\infty$, we conclude that    $\lim\limits_{t \rightarrow +\infty} |G({F}^t(x))|^2=1$. 

\end{proof}

\begin{proof}[{\bf{Proof of Theorem \ref{res.11}}}] 
The  hypersurfaces  
$
{F}^t(x)=\cosh(\mu(t))F(x)+\sinh(\mu(t))N(x)
$, 
are parallel to $F$, $\forall (x,t) \in  \mathbb{E}^{n} \times I$,
where $\mu$ satisfies 
$\prod_{i=1}^{n}(\,\cosh(\mu(t))-k_{i}\sinh(\mu(t))\,)=e^{t}$. 
Since all the principal curvatures are equal to $k=\pm 1$, we conclude that 
\begin{eqnarray}\label{eq.p.189}
k\sinh(\mu(t))=\cosh(\mu(t))-e^{t/n},
\end{eqnarray}
$\forall t \in I$.
The square on both sides of $(\ref{eq.p.189})$ and $k=\pm 1$, imply that 
$
\cosh(\mu(t))=\cosh(\pm t/n)$ and $\sinh(\mu(t))=\sinh\left(\mp\frac{t}{n}\right)$, 
$\forall t \in \R$. Hence, $\mu(t)=\mp {t}/{n}$ and 
$\lim\limits_{t \rightarrow +\infty}\cosh(\mu(t))=+\infty$.

It follows from Lemma \ref{res.4} that the principal curvatures ${k}^{t}_{i}$ of ${F}^{t}(\E^{n})$ are given by
$k_i^t=k$,  
and hence ${F}^{t}(\E^{n})$ is a horosphere, $\forall t \in \R$. Considering $\{E_{i}\}_{i=1}^{n}$ a local orthonormal frame of principal directions of $F(\E^{n})$ at  $x \in \E^{n}$, it follows   from Lemma \ref{res.4}, that the first fundamental form at $F^t(x)$  is  given by 
\begin{eqnarray}
I^{t}_{x}(E_{i},E_{j})=\left[\cosh(\mu(t))\mp \sinh(\mu(t))\right]^{2} \delta_{ij}=e^{2t/n}\delta_{ij}, \nonumber
\end{eqnarray}
$\forall t \in \R$. Thus, 
$\lim\limits_{t \rightarrow -\infty}I^{t}_{x}(E_{i},E_{j})=0$. 
Hence  ${F}^{t}
$ is defined $\forall t \in \R$, i.e., the solution to the IMCF  is an eternal solution. Moreover, it follows, from Lemma \ref{res.23} that   
${F}^t(\E^{n})$ converges to a point when $t\rightarrow -\infty$ and it converges to $\partial\mathbb{H}^{n+1}$ when $t \longrightarrow +\infty$.

\end{proof}

\begin{proof}[{\bf{Proof of Theorem \ref{res.22}}}]
We consider the family of  hypersurfaces ${F}^{t}(x)=\cosh(\mu(t))F(x)+\sinh(\mu(t))N(x)$ parallel to $F$. 
 Since all the principal curvatures of $M$ are equal to $k$, we have that  $\mu(t)$ satisfies
\begin{eqnarray}\label{eq.p.196}
k\sinh(\mu(t))=\cosh(\mu(t))-e^{t/n},
\end{eqnarray}
$\forall t \in I$. 
The square of both sides of $(\ref{eq.p.196})$ implies that
\begin{eqnarray}
\left(k^{2}-1\right)\cosh^{2}(\mu(t))+2e^{t/n}\cosh(\mu(t))-\left(k^{2}+e^{2t/n}\right)=0, \nonumber
\end{eqnarray}
$\forall t \in I$. 
By solving this equation  for $\cosh(\mu(t))$ and using the fact that $\mu(0)=0$, we obtain
\begin{eqnarray}\label{eq.p.200}
\cosh(\mu(t))=\frac{e^{t/n}-|k|\sqrt{q(t)}}{1-k^{2}}.
\quad \mbox{ where } \quad q(t)=e^{2t/n}+k^{2}-1. 
\end{eqnarray}
Equations  $(\ref{eq.p.196})$ and $(\ref{eq.p.200})$ imply that
\begin{eqnarray}
\sinh(\mu(t))=sgn(k)\frac{|k|e^{t/n}-\sqrt{q(t)}}{1-k^{2}}. \nonumber
\end{eqnarray}
Moreover
$	\lim\limits_{t \rightarrow +\infty}\cosh(\mu(t))=+\infty$. 

\item $(i)$ If $0<|k|<1$, then  $q(t) \geq 0,$ whenever 
 $t\geq t^*$ where $ t^*=\frac{n}{2}\ln(1-k^{2})$. 
Let $\{E_{j}\}_{j=1}^{n}$ be a local orthonormal frame of principal directions of $F(M)\subset \mathbb{H}^{n+1}$ at $x \in M$. It follows from Lemma $\ref{res.4}$ and $(\ref{eq.p.196})$ that, $\forall t\geq t^{*}$, the fundamnetal forms are given by 
\begin{eqnarray*}
I^{t}_{x}(E_{i},E_{j})=\left[\cosh(\mu(t))-k \sinh(\mu(t))\right]^{2}\delta_{ij}=e^{2t/n}\delta_{ij},
\end{eqnarray*}
\begin{eqnarray}
{II}^{t}_{x}(E_{i},E_{j})=\delta_{ij}sgn(k)e^{t/n} \sqrt{q(t)}. \nonumber
\end{eqnarray}
Hence, at   $t=t^{*}$, we have  
$I^{t^{*}}_{x}(E_{i},E_{j})=(1-k^{2})\delta_{ij}$ and since $q(t^{*})=0$, 
${II}^{t^{*}}_{x}(E_{i},E_{j})=0$. However, the mean curvature of the 
parallel surfaces cannot vanish. Therefore, ${F}^t(M)$ is defined for $t \in (t^{*},+\infty)$, i.e., the solution to the IMCF is an immortal solution,   
that converges to a totally geodesic hypersurface of $\mathbb{H}^{n+1}$ 
when $t\rightarrow t^*$ and it converges to $\partial\mathbb{H}^{n+1}$ when $t \longrightarrow +\infty$, as a consequence of   Lemma \ref{res.23}.  

\item $(ii)$ If $|k|>1$, then $q(t) \geq 0$, $\forall t \in \R$. Therefore, 
${F}^{t}$ is defined for $t \in \R$, i.e., it is an eternal solution.  
Consider $\{E_{j}\}_{j=1}^{n}$ a local orthonormal frame of principal directions of $F(M)\subset \mathbb{H}^{n+1}$ at $x \in M$. 
It follows from \eqref{eq.p.196} and Lemma \ref{res.4},  that
$I^{t}_{x}(E_{i},E_{j})=e^{2t/n}\delta_{ij}$, 
$\forall t \in \R$. Hence, 
$	\lim\limits_{t \rightarrow -\infty}I^{t}_{x}(E_{i},E_{j})=0$. 
 Therefore, ${F}^{t}(M)$ converges to a point when $t \rightarrow -\infty$ and it  converges to the boundary $\partial\mathbb{H}^{n+1}$ 
of the hyperbolic space,  when $t \longrightarrow +\infty$.

\end{proof}

\begin{proof}[{\bf{Proof of Theorem \ref{res.17}}}]
From Theorem $\ref{res.16}$, we have that 
	\begin{eqnarray}
	{F}^{t}(x)=\cosh(\mu(t))F(x)+\sinh(\mu(t))N(x), \nonumber
	\end{eqnarray}
$\forall (x,t) \in  \mathbb{S}^{m}\times \mathbb{H}^{m} \times I$,	where
$\prod_{i=1}^{n}(\,\cosh(\mu(t))-k_{i}\sinh(\mu(t))\,)=e^{t}$. 
Since,  we are assuming that the principal curvatures $k_{1}$ and $k_{2}$ have the same multiplicity $m$,  
using the fact that 
 $k_{1}k_{2}=1$, we have that 
	\begin{eqnarray}\label{eq.p.119}
	\frac{k_{1}+k_{2}}{2}\sinh(2\mu(t))=\cosh(2\mu(t))-e^{t/m},
	\end{eqnarray}
$\forall t \in I$. 
We introduce the notation $a=(k_1+k_2)/2$, where $a>1$ by definition.   
The square on both sides of $(\ref{eq.p.119})$ implies that $\cosh(2\mu(t))$ must satify the algebraic equation 
	\begin{eqnarray}\label{eq.r.2}
	\left((a^2-1\right)\cosh^{2}(2\mu(t))+2e^{t/m}\cosh(2\mu(t))-(a^2+e^{2t/m})=0.
	\end{eqnarray}
Since  $\mu(0)=0$, it follows from \eqref{eq.r.2} and  \eqref{eq.p.119}, that $\forall t \in \R$,
	\begin{eqnarray}\label{eq.r.1}
	\cosh (2\mu(t))=\frac{-e^{t/m}+a\sqrt{q(t)}} {a^2-1}, 
	\qquad 
	\sinh (2	\mu(t))=\frac{-a e^{t/m}+\sqrt{q(t)}} {a^2-1},
	\end{eqnarray}
where 	
\begin{eqnarray*}
    q(t):=e^{2t/m}+a^2-1, 
\end{eqnarray*}
$\forall t \in \R$. The explicit expressions for  $\cosh(\mu(t))$ and $\sinh(\mu(t))$ can be easily obtained from \eqref{eq.r.1}.
Therefore, ${F}^t$ is defined for $t\in \R$, i.e., the solution to the IMCF  is eternal. Moreover,  $k_{1}>1$ implies that 
$	\lim\limits_{t \longrightarrow +\infty}\cosh(\mu(t))=+\infty$.

	Consider an orthonormal frame $\{E_{1},\ldots,E_{m},E_{m+1},...,E_{2m}=E_{n}\}$ of principal directions on $F(\mathbb{S}^{m} \times \mathbb{H}^{m}) \subset \mathbb{H}^{n+1} \subset \mathbb{L}^{n+2}$, in $x \in \mathbb{S}^{m}\times \mathbb{H}^{m}$, such that $E_{1}, ..., E_{m}$ are tangent to $\mathbb{S}^{m}$ and $E_{m+1}, ...,E_{n}$ are tangent to $\mathbb{H}^{m}$. From Lemma $\ref{res.4}$, 
 	the first fundamental form is given by 
 \begin{eqnarray*}
	I^{t}_{x}(E_{i},E_{j})=
\left\{
\begin{array}{lcl}	
	\left(k_{1} \sqrt{q(t)}+\frac{1-k_{1}^{2}}{2}\right)\delta_{ij},  &\quad  \mbox{ if } &  1\leq i,j\leq m, \\
\left(k_{2} \sqrt{q(t)}+\frac{1-k_{2}^{2}}{2}\right)\delta_{ij},  \quad  & \mbox{ if } &  m+1 \leq i,j\leq n.
\end{array}
\right.
	\end{eqnarray*}
Hence, from the expression of $q(t)$,  for $1 \leq i,j \leq m$, and  since  $k_1k_2=1$, we get
	\begin{eqnarray}\label{eq.a.8}
	\lim \limits_{t \longrightarrow -\infty}I^{t}_{x}(E_{i},E_{j})=0, 
	\end{eqnarray}
while for  $m+1 \leq i,j\leq n$,   
\[
\lim \limits_{t \longrightarrow -\infty}I^{t}_{x}(E_{i},E_{j})=(1-k_2^2)\delta_{ij}. 
\]	
 Therefore, ${F}^{t}(\mathbb{S}^{m}\times \mathbb{H}^{m})$ converges to an $m$-dimensional submanifold of $\mathbb{H}^{n+1}$ when $t \rightarrow -\infty$ and  it follows from Lemma \ref{res.23}, that  it converges to the boundary $\partial\mathbb{H}^{n+1}$ when $t \rightarrow +\infty$. 

\end{proof}

Before proving Theorem \ref{resgeral}, we state some well known relations between the principal curvatures of an isoparametric hypersurface of the sphere  that will be used in the proof. For the expressions of the distinct principal curvatures in terms of the first curvature see for example \cite{ReisTenenblat}.  

\begin{remark} \label{relaki}{\rm Consider an $n$-dimensional isoparametric hypersurface of the unit sphere $\mathbb{S}^{n+1}$,  
with $g$ distinct principal curvatures, $k_i$, $i=1,...,g$. Then 

\noindent {\bf 1)} If $g=1$, we are considering $k_1>0$.

\noindent {\bf 2)} If $g=2$, then  
$k_2=-{1}/{k_{1}}$, where w.l.o.g. we may consider $k_1>0$.

\noindent {\bf 3)} If $g=3$, w.l.o.g we may consider $k_{1}>\sqrt{3}$, then 
\[
	k_{2}=\frac{\sqrt{3}k_{1}-3}{3k_{1}+\sqrt{3}}, \qquad  k_{3}=\frac{\sqrt{3}k_{1}+3}{\sqrt{3}-3k_{1}}, \qquad  k_{3}<0<k_{2}<k_{1}.
\]
Thus,  a straightforward computation shows that the following  identities hold:
	\begin{eqnarray}\label{eq.p.44}
\sum_{i=1}^3k_{i}=\frac{3k_{1}(k_{1}^{2}-3)}{3k_{1}^{2}-1},\qquad  
\sum_{\substack {i,j=1 \\ i<j}}^3 k_{i}k_{j}=-3,
\qquad k_{1}k_{2}k_{3}=-\frac{k_{1}(k_{1}^{2}-3)}{3k_{1}^{2}-1}.
	\end{eqnarray}
	
\noindent {\bf 4)} If $g=4$, w.l.o.g. we may consider $k_{1}>1$, hence 	
\[
k_{2}=\frac{k_{1}-1}{k_{1}+1},\qquad 
 k_{3}=-\frac{1}{k_{1}} \qquad 
k_{4}=\frac{k_{1}+1}{1-k_{1}}\quad \mbox{ and } \quad k_{4}<k_{3}<0<k_{2}<k_{1}.
\]
Thus,  
\begin{eqnarray}\label{eq.p.42}
\sum_{i=1}^4 k_i=\frac{k_{1}^{4}-6k_{1}^{2}+1}{k_{1}(k_{1}^{2}-1)}, \quad  
k_{1} k_{3}=k_{2}k_{4}=-1, \quad  (k_{1}+k_{3})(k_{2}+k_{4})=-4, \quad  k_{1}k_{2}k_{3}k_{4}=1. 
\end{eqnarray}

\noindent {\bf 5)} If $g=6$,  w.l.o.g. we may consider $k_{1}>\sqrt{3}$, 	then 
\[
k_{2}=\frac{\sqrt{3}k_{1}-1}{k_{1}+\sqrt{3}}, \qquad 
  k_{3}=\frac{\sqrt{3}k_{1}-3}{3k_{1}+\sqrt{3}}, \qquad  k_{4}=-\frac{1}{k_{1}}, \qquad 
  k_{5}=\frac{\sqrt{3}k_{1}+3}{\sqrt{3}-3k_{1}} \quad \text{ and } \quad  k_{6}=\frac{\sqrt{3}k_{1}+1}{\sqrt{3}-k_{1}}, 
\]  
\vspace*{.1in}

\noindent where $k_{6}<k_{5}<k_{4}<0<k_{3}<k_{2}<k_{1}$. Thus,
\begin{eqnarray}\label{eq.p.500}
\sum_{j=1}^6  k_{j} =3 \frac{k_{1}^{6}-15k_{1}^{4}+15k_{1}^{2}-1}{3k_{1}^{5}-10k_{1}^{3}+3k_{1}}, \qquad k_{1}k_{4}=k_{3}k_{6}=k_{2}k_{5}=k_{1}k_{2}k_{3}k_{4}k_{5}k_{6}=-1.  
\end{eqnarray}

We observe that the identities \eqref{eq.p.44} - \eqref{eq.p.500} are obtained after a straightforward computation, where each principal curvature on the left hand side of the  identity is  substituted by its expression in terms of $k_1$. The identities will be useful in the proof of Theorem \ref{resgeral}.
}
\end{remark}

\vspace{.2in}
 
We will now prove Theorem \ref{resgeral}. 

\begin{proof}[{\bf{Proof of Theorem \ref{resgeral}}}] Let $F$ be an isoparametric hypersurface of the sphere, with $g$  distinct principal curvatures, $k_j$, $j=1...g$  and  $g\in\{1,2,3,4,6\}$.  Without loss of generality we may chose $k_1>0$, as in Remark \ref{relaki}.  

It follows from Theorem $\ref{res.16}$, that the family of hypersurfaces
 parallel to $F$  is a solution to the IMCF if  
${F}^{t}(x)=\cos(\mu(t))F(x)+\sin(\mu(t))N(x)$,
$\forall (x,t) \in M \times I$, $0\in I$, 	where since all the curvatures have the same multiplicty $m$, $\mu(t)$ satifies
\begin{eqnarray} \label{eq.p.443}
\prod_{j=1}^{g}\left( \cos(\mu(t))-k_{j}\sin(\mu(t))\right) =e^{t/m}.
\end{eqnarray}
We introduce the notation 
\[
a=\frac{1}{g} \sum_{j=1}^g k_j=\frac{H}{n}.
\]
Observe that if $g=1$ then $a=k_1$ and $m=n$. Moreover, note that $a>0$. 

A straightforward computation shows that  from 
the relations in Remark \ref{relaki}, and in particular   
\eqref{eq.p.44}-\eqref{eq.p.500}, we have    $\forall t \in I$ and $g\in\{ 1,2,3,4,6\}$,  
\begin{eqnarray}\label{eq.p.171}
\cos(g\mu(t))-a\sin(g \mu(t))=e^{t/m}.
\end{eqnarray}
The square of $(\ref{eq.p.171})$  implies that $\cos(g\mu(t))$ satisfies 
the  algebraic equation
\begin{eqnarray}\label{eq.p.172}
(a^{2}+1)\cos^{2}(g\mu(t))-2e^{t/m}\cos(g\mu(t))+e^{2t/m}-a^{2}=0,
\end{eqnarray}
whose solution, since  $\mu(0)=0$, is given by 
\begin{eqnarray}\label{eq.p.173}
\cos(g\mu(t))=\frac{e^{t/m}+ a\sqrt{q(t)}}{a^{2}+1},
\quad\mbox{ where }\quad  q(t)=a^{2}+1-e^{2t/m}.
\end{eqnarray}
 Note that $q(t) > 0$, whenever $t<t^*$, and $q(t^*)=0$, for 
\begin{equation}\label{tstar}
t^*= \frac{m}{2}\ln(a^{2}+1).
\end{equation} 
Note that $\cos(g\mu(t))>0$, $\forall t\leq  t^{*}$.  Moreover, 
it follows from $(\ref{eq.p.171})$ and $(\ref{eq.p.173})$ that $\forall t \leq  t^{*}$,  
\begin{eqnarray}\label{eq.a.40}
\sin(g\mu(t))=\frac{-ae^{t/m}+\sqrt{q(t)}}{a^{2}+1} \quad  \mbox{ and }
\quad 
\tan(g\mu(t))=\frac{-ae^{t/m}+\sqrt{q(t)}}{e^{t/m}+ a\sqrt{q(t)}}.
\end{eqnarray}

We claim that 
\begin{equation}\label{cosmupos}
\cos(\mu(t))>0, \qquad \mbox{for all} \quad t\leq t^*.
\end{equation}
 In fact, otherwise, since  $\cos(\mu(0))=1$  if $\cos(\mu(t))$ changes  sign  then there exists 
$t_0\leq t^*$ such that $\cos(\mu(t_0))=0$.
 Observe that it follows from the trigonometric identities 
\begin{equation}\label{cosgmucos}
\cos(g\mu(t))=\left\{
\begin{array}{lcl}
\cos(\mu(t)), & \mbox{  if } & g=1,\\
2\cos^2(\mu(t))-1, & \mbox{  if }& g=2,\\
4\cos^{3}(\mu(t))-3\cos(\mu(t)), & \mbox{  if }& g=3,\\
8\cos^{4}(\mu(t))-8\cos^{2}(\mu(t))+1,  & \mbox{  if }& g=4,\\
32\cos^{6}(\mu(t))-48\cos^{4}(\mu(t))+18\cos^{2}(\mu(t))-1,& \mbox{  if } & g=6,   
\end{array}
\right.
\end{equation}
that, for $g=1, 2, 3, 6 $,  such $t_0$ does not exist since $\cos(g\mu(t_0))>0$, $\forall t\leq  t^{*}$. If $g=4$, $\cos(\mu(t_0))=0$ implies that $\cos(4\mu(t_0))=1$ and hence $\sin(4\mu(t_0))=0$. However, in this case, \eqref{eq.p.173} and \eqref{eq.a.40} imply $(a^2+1)(e^{t_0/m}-1)=0$, i.e., $t_0=0$, which contradicts  $\cos(\mu(t_0))=0$. 

Moreover, it follows  from \eqref{cosmupos} and \eqref{eq.p.443} that 
\begin{eqnarray}\label{eq.p.382}
(\cos(\mu(t)))^g\prod_{j=1}^{g} [1-k_{j}\tan(\mu(t))] 
=e^{t/m},
\end{eqnarray}
and therefore, 
\begin{eqnarray}\label{eq.p.383}
\tan(\mu(t))\not=\frac{1}{k_{j}}, \quad j=1,...g, \quad \mbox{ and }\forall t\leq  t^{*}.
\end{eqnarray}

We will now show that the hypersurface $F^t(M)$ is well defined 
for all $t\leq t^*$. Moreover, it converges to a minimal hypersurface of the sphere when $t\rightarrow t^*$ and it converges to a submanifold of 
dimension $n-m$ when $t\rightarrow -\infty$. 
 Let $E_{j,1},\dots,E_{j,m}$, for each $j$, $1\leq j\leq g$, be an orthonormal frame  of principal directions on $F(M) \subset \mathbb{S}^{n+1} \subset \E^{n+2}$. Using Lemma $\ref{res.4}$ one obtains that, $\forall t \leq t^{*}$, the first fundamental form 
\begin{eqnarray}\label{eq.p.180}
I^{t}(E_{j,r}\,,\,E_{j,s})=\cos^{2}(\mu(t))\left[1-k_{j}\tan(\mu(t))\right]^{2}\delta_{rs},
\end{eqnarray}
$\forall r, s$ with $1\leq r,s \leq m$ and $j=1,...,g$ and hence  $I^{t}(E_{j,r}\,,\,E_{j,r})\not=0$, since  \eqref{cosmupos} and \eqref{eq.p.383} hold.


Moreover, from  \eqref{eq.p.173}, we get
\begin{eqnarray}
\lim \limits_{t \longrightarrow -\infty}\cos(g\mu(t))=\frac{a}{\sqrt{a^{2}+1}} \nonumber
\end{eqnarray}
and since $a>0$, from \eqref{eq.p.382} and \eqref{cosgmucos} we conclude that 
\begin{equation}\label{limcosmuneq0}
\lim \limits_{t \longrightarrow -\infty} \cos(\mu(t)) \not= 0.
\end{equation}
It follows  from  $H>0$ and \eqref{eq.p.112}, that $\tan(\mu(t))$ is a decreasing function and hence $\tan(\mu(t))>0$, for all $t<0$. 
Therefore, from \eqref{eq.p.382},  
\eqref{limcosmuneq0}, $k_1>0$ and the order of the principal curvatures  $k_j<k_1$, for all $j\neq 1$ (see Remark \ref{relaki}), we conclude that  
\[ 
\lim _{t\longrightarrow -\infty}\tan(\mu(t))=\frac{1}{k_{1}}.
\]
Consequently, $(\ref{eq.p.180})$ implies  that  $\forall r, s$ with $1\leq r, s \leq m$,
\begin{eqnarray}
\lim _{t\longrightarrow -\infty}I^{t}(E_{1,r}\,,\,E_{1,s})=0 \quad 
\mbox{ and } 
\lim _{t\longrightarrow -\infty}I^{t}(E_{j,r}\,,\,E_{j,s})=c_j\delta_{rs}
\nonumber
\end{eqnarray}
 for each  $j>1$ and some constant  $c_j>0$. We conclude that  $F^t(M)$ converges to an $(n-m)$-dimensional submanifold of the sphere, when 
$t\rightarrow -\infty$. 
 

It follows  from  
Lemma \ref{res.4}, that the principal curvatures of $F^t$ are given by 
\begin{equation}\label{kitki}
k_i^t= \frac{ \tan(\mu(t))+k_i}{1-k_i\tan(\mu(t))}.
\end{equation}
Therefore, a straightforward computation, using Remark \ref{relaki} and   the relations given in 
\eqref{eq.p.44}-\eqref{eq.p.42},  implies that
the mean curvature ${H}^{t}$ of ${F}^{t}$ is given by 
\begin{eqnarray*}
{H}^{t}=m\sum_{i=1}^g k_i^t = \frac{n}{e^{t/m}}cos(g\mu(t)) (\tan(g\mu(t))+a),
\end{eqnarray*}
Hence  ${H}^{t}=0$ if and only if $\tan(g\mu(t))=-a$. Thus, from $(\ref{eq.a.40})$, we have that ${H}^{t}=0$ if and only if 
$q(t)=0$, i.e., $t=t^{*}$. Therefore, the flow collapses into a minimal surface at $t^*$. 
	
If $g=1$ then $k_1=a$ and it follows from \eqref{kitki} that all the principal curvatures at $t^*$ are equal to $k_1^{t^*}=0$.    
Thus, we conclude that  $ {F}^{t}(M)$ expands from a point and converges to a totally geodesic hypersurface of $\mathbb{S}^{n+1}$ when $t \rightarrow t^{*}$.

If $g\in\{ 2,3,4,6\}$,    
since $F^{t^*}(M)$ is a minimal submanifold we have 
\[
\sum_{i=1}^{g} (k_i^{t^*})^2= -2\sum_{\substack {i,j=1\\ i<j}}^g k_i^{t^*}k_j^{t^*}.
\] 
Using \eqref{kitki} and the expressions of $k_i$ in terms of $k_1$ given in Remark \ref{relaki}, a straightforward computation  shows that
\[
\sum_{\substack{i,j=1\\ i<j}}^g k_i^{t^*}k_j^{t^*}=
  \left\{
\begin{array}{lcl}
-1, & \mbox{  if }& g=2,\\
-3, & \mbox{  if }& g=3,\\
-6,  & \mbox{  if }& g=4,\\
-15,& \mbox{  if } & g=6,    
\end{array}
\right.
\] 
i.e.,  $\sum_{\substack{i,j=1\\ i<j}}^g k_i^{t^*}k_j^{t^*}=-g(g-1)/2$. 
Therefore, the square length of the second fundamental form of the minimal submanifold is given by
\[
|A|^2= m\sum_{i=1}^{g} (k_i^{t^*})^2= m g(g-1)=n(g-1), 
\] 
and its scalar curvature is given by $R=n(n-1)-|A|^2$, i.e.,   
\[
R=m(m-1)g^2=n(n-g). 
\]

We conclude that ${F}^{t}(M)$ is defined on $I=(-\infty,t^{*})$, i.e., 
it is an ancient solution that expands from an $m(g-1)$-dimensional submanifold of $\mathbb{S}^{n+1}$ and converges to a minimal hypersurface of $\mathbb{S}^{n+1}$ when $t \longrightarrow t^{*}$.
${F}^{t^*}(M)$ is a totally geodesic submanifold of the sphere when $g=1$, it is a Clifford minimal hypersurface of the sphere, when $g=2$,  and it is a Cartan type minimal submanifold when $g= 3,4, 6$.       

\end{proof}

 \section{Visualizing some solutions}\label{visual}

In this section, we will visualize some examples of solutions to the IMCF by parallel hypersurfaces in space forms.

\noindent \begin{example}
{\normalfont {Consider the parametrized horosphere $F: (0,\pi) \times (0,2\pi) \subset \E^{2} \longrightarrow \mathbb{H}^{3} \subset \mathbb{L}^{4}$ and its nomal
$F(\theta, \varphi)
=\displaystyle\frac{1}{1+\cos\theta}\left(\frac{3}{2}+\cos\theta,\,
\sin\theta\cos\varphi,\,\sin\theta\sin\varphi,\,-\frac{1}{2}-\cos\theta\right)$
and its inward unit normal vector field 
$N(\theta, \varphi)=
\displaystyle\frac{1}{1+\cos\theta}\left(1+\frac{1}{2}\cos\theta,\, \sin\theta \cos \varphi,\,\sin\theta \sin \varphi,\,-1-\frac{3}{2}\cos\theta\right)$. 

Thus, $k_{1}=k_{2}=-1$ are the principal curvatures. Theorem $\ref{res.11}$, shows that 
${F}^{t}=\cosh (t / 2)\, F+\sinh (t / 2)\, N$,  
 $\forall t \in \R$,  is an eternal solution to the IMCF, whose image is a horosphere, that expands from a point on the boundary $\partial\mathbb{H}^{3}$ and converges to the boundary when  $t \longrightarrow +\infty$. In Figure \ref{fig:Sol3}
 one  can see the evolution in  the Poincaré ball model,  by using the isometry $G: \mathbb{H}^{3} \subset \mathbb{L}^{4} \longrightarrow \mathbb{B}^{3} \subset \mathbb{R}^{3}$ given by \eqref{functionG}.
\begin{figure}[!h]
	\centering
	\includegraphics[scale=0.56]{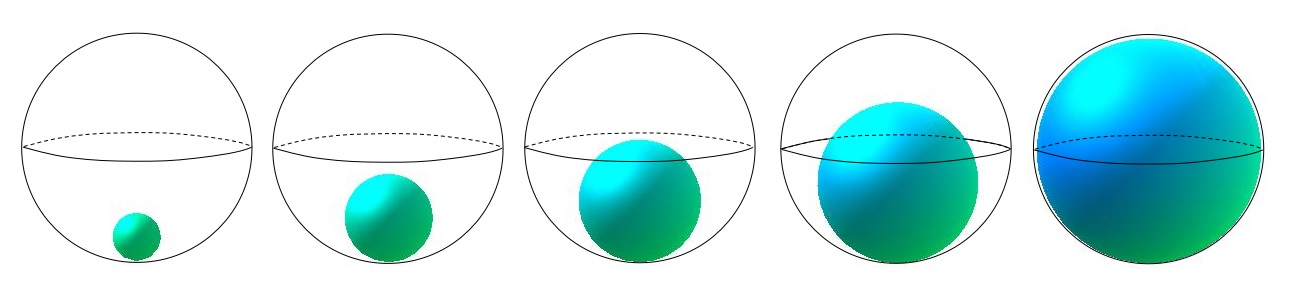}
	\caption{IMCF of the horosphere}
	\label{fig:Sol3}
\end{figure}
}}
\end{example}

\begin{example}
    {\normalfont {Consider the parametrized hyperbolic cylinder 
$F: (0,2\pi) \times \R \longrightarrow \mathbb{H}^{3} \subset \mathbb{L}^{4}$   given by    
$F(\theta, \varphi)=(\sqrt{2} \cosh \varphi, \cos \theta, \sin \theta, \sqrt{2} \sinh \varphi)$ and  
its  
inward 
unit normal vector field 
$N(\theta, \varphi)=(-\cosh \varphi,-\sqrt{2} \cos \theta,-\sqrt{2} \operatorname{sin} \theta,-\operatorname{sinh} \varphi)$. 
Then $k_{1}=\sqrt{2}$ and $k_{2}=\sqrt{2}/2$ are the principal curvatures.   Theorem $\ref{res.17}$, shows that ${F}^{t}=\cosh(\mu(t))F+\sinh(\mu(t))N$, 
	where
 \begin{eqnarray}
\cosh (\mu(t))=\sqrt{-4 e^{t}+\frac{3}{2} \sqrt{1+8 e^{2 t}}+\frac{1}{2}} ~~ \text{  and  } ~~ \sinh(\mu(t))=-sgn(t)\sqrt{-4 e^{t}+\frac{3}{2} \sqrt{1+8 e^{2 t}}-\frac{1}{2}} \nonumber
\end{eqnarray}
is an eternal solution to the IMCF, whose image that expands from a curve of $\mathbb{H}^{3}$ and converges to $\partial\mathbb{H}^{3}$ when $t \longrightarrow +\infty$. In Figure \ref{fig:Sol4}, one can see the evolution in the Poincaré ball model, by using the isometry $G$
given by \eqref{functionG}.
\begin{figure}[!h]
	\centering
	\includegraphics[scale=0.6]{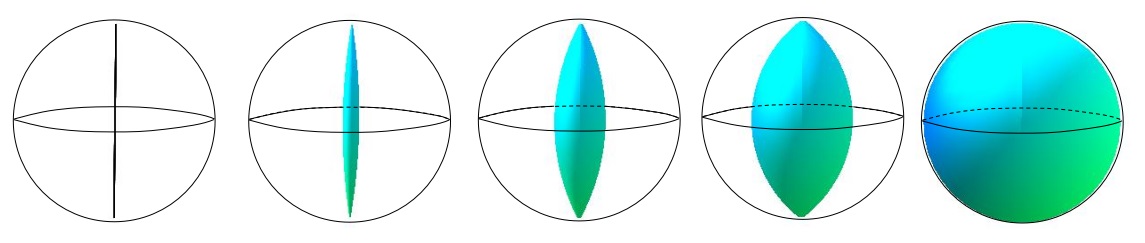}
	\caption{IMCF of the hyperbolic cylinder}
	\label{fig:Sol4}
	\end{figure}
}}
\end{example}

\begin{example}
    {\normalfont {Let 
    $F(\theta, \varphi)=\frac{\sqrt{3}}{3}\left(\sqrt{2} \cos \varphi, \sqrt{2} \sin \varphi,  \cos \theta,  \sin \theta\right). 
$ the parametrized  Hopf Torus in $\mathbb{S}^{3}\subset \mathbb{R}^{4}$ and let $
N(\theta, \varphi)=\frac{\sqrt{3}}{3} \left(\cos \varphi,  \sin \varphi, -\sqrt{2} \cos \theta, -\sqrt{2} \sin \theta\right)$ be 
the inward unit normal vector field. 
Thus, $k_{1}=\sqrt{2}$ and $k_{2}=-\sqrt{2}/2$ are the principal curvatures. Theorem  \ref{resgeral} shows that
$
{F}^{t}=\cos(\mu(t))F+\sin(\mu(t))N$ 
where
\[
\cos (\mu(t))=\sqrt{\frac{8e^{t}+\sqrt{9-8e^{2t}}}{18}+\frac{1}{2}}
\quad 
\mbox{ and }
\sin (\mu(t))=-sgn(t)\sqrt{\frac{1}{2}-\frac{8e^{t}+\sqrt{9-8e^{2t}}}{18}}. \]
is  an ancient solution to the IMCF. 
Its image expands from a curve of $\mathbb{S}^{3}$ and it converges to a minimal hypersurface of $\mathbb{S}^{3}$ when $t \longrightarrow \ln \left(\frac{3\sqrt{2}}{4}\right)$. In Figure \ref{fig:Sol5}, one can see the evolution of its stereographic projection.
\begin{figure}[!h]
	\centering
	\includegraphics[scale=0.44]{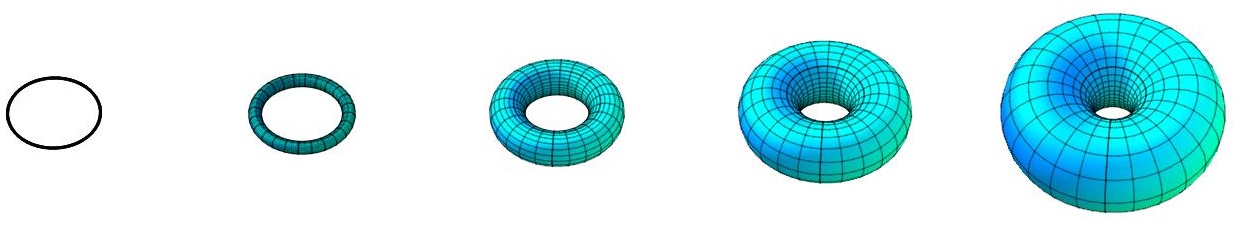}
	\caption{IMCF of the Hopf Torus}
	\label{fig:Sol5}
\end{figure}
}}
\end{example}

\end{document}